\documentclass[11pt,leqno]{amsproc}

\usepackage{times,eucal,txfonts,dsfont,mathptmx,newtxmath,newtxtext}

\frenchspacing

\makeatletter
\def\msection{\@startsection{section} 
	{1} 
	{0pt} 
	{-1ex plus -.1ex minus -0.9ex} 
	{-.9ex plus -.2ex} 
	{\bfseries} 
}
\def\msubsection{\@startsection{subsection} 
	{2} 
	{0pt} 
	{-1ex plus -.1ex minus -0.2ex} 
	{-.9ex plus -.2ex} 
	{\normalfont} 
} 
\makeatother

\usepackage[text={6.5in,9in},centering]{geometry}

\newcommand{\inv}{^{\raisebox{.2ex}{$\scriptscriptstyle-1$}}}

\newcommand{\cl}{\textit{c}\!\!\;\ell}  
         
\newtheorem{theorem}{Theorem}[section] 
\newtheorem{proposition}[theorem]{Proposition}
\newtheorem{lemma}[theorem]{Lemma}
\newtheorem{corollary}[theorem]{Corollary} 
\theoremstyle{definition}

\newtheorem{remark}[theorem]{Remark}    

\usepackage[dvipsnames]{xcolor}   
\usepackage{xparse}
\usepackage{xr-hyper}
\definecolor{brightmaroon}{rgb}{0.76, 0.13, 0.28}
\usepackage[linktocpage=true,colorlinks=true,hyperindex,citecolor=teal,linkcolor=Royal Blue]{hyperref}

\begin{document} 
  
\author{Amartya Goswami}

\address{[1] Department of Mathematics and Applied Mathematics, University of Johannesburg, South Africa.
\newline\hspace*{.35cm} [2] National Institute for Theoretical and Computational Sciences (NITheCS), Johannesburg, South Africa.}

\email{agoswami@uj.ac.za}

\title{Lower spaces of multiplicative lattices}

\date{}

\subjclass{06F99; 54B35}


\keywords{multiplicative lattice ; compactness, sobriety, spectral space}

\maketitle
   
\begin{abstract}
We consider some distinguished classes of elements of a multiplicative lattice endowed with  coarse lower topologies, and call them lower spaces. The primary objective of this paper is to study the topological properties of these lower spaces, encompassing lower separation axioms and compactness. We characterize lower spaces that exhibit sobriety. Introducing the concept of strongly disconnected spaces, we establish a correlation between strongly disconnected lower spaces and the presence of nontrivial idempotent elements in the corresponding multiplicative lattices. Additionally, we provide a sufficient condition for a lower space to be connected. We prove that the lower space of proper elements is a spectral space, we further explore continuous maps between lower spaces.
\end{abstract}   

\smallskip

\msection{Introduction and preliminaries}\label{intro}

Recall from \cite{WD39} (see also \cite{Dil62}) that
a \emph{multiplicative lattice} is a complete lattice $(L, \leqslant, 0, 1)$ endowed with a binary operation operation $\cdot  $, satisfying the following axioms:
\begin{enumerate}
		
\item\label{mla} $x\cdot   (y\cdot   z)=(x\cdot   y)\cdot   z$,
		
\item\label{mlc} $x\cdot   y = y\cdot   x$,
		
\item\label{jdp} $x\cdot   (\bigvee_{\lambda \in \Lambda} y_{\lambda})=\bigvee_{\lambda \in \Lambda}(x\cdot   y_{\lambda})$, 
		
\item \label{mid} $x\cdot   1=x$,
\end{enumerate}
for all $x,$ $y,$ $y_{\lambda}$, $z\in L$, and for all $\lambda \in \Lambda$, where $\Lambda$ is an index set. Note that  in the sense of \cite{Mul86},  a multiplicative lattice is a commutative, unital quantale.
We shall also use the notation $x^n$ to denote $x\cdot   \cdots \cdot   x$ (repeated $n$ times).
In the following lemma, we compile a number of elementary results on multiplicative lattices that  be  utilized in sequel.

\begin{lemma}\label{bip}
Let $L$ be a multiplicative lattice. Then the following hold.
\begin{enumerate}
\item \label{mul}
$x\cdot  y\leqslant x\wedge y$, for all $x, y\in L$.
		
\item $x\cdot  0=0$, for all $x\in L$.
		
\item\label{mon} If $x\leqslant y$, then $x\cdot  z\leqslant y\cdot   z,$ for all $z\in L$.
		
\item\label{monj} If $x\leqslant y$ and $u\leqslant v$, then $x\cdot  u\leqslant y\cdot  v$, for all $x,$ $y,$ $u,$ $v\in L$.
\end{enumerate}
\end{lemma}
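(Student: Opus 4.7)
The plan is to derive these four facts directly from the four axioms, proving them in the order \emph{(2), (3), (4), (1)} rather than as stated, since each relies on the previous.

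First I would establish (2) by applying the join-distributivity axiom (\ref{jdp}) to an empty family: since $\bigvee_{\lambda\in\varnothing}y_\lambda=0$, we get $x\cdot 0 = x\cdot\bigvee_{\lambda\in\varnothing}y_\lambda = \bigvee_{\lambda\in\varnothing}(x\cdot y_\lambda)=0$. (An alternative, if one prefers not to invoke the empty join, is to note $0\vee 0=0$ and expand $x\cdot 0 = x\cdot(0\vee 0) = (x\cdot 0)\vee(x\cdot 0)$, giving $x\cdot 0\leqslant x\cdot 0$ and, combined with the identity $x\cdot 0 = x\cdot(0\vee y)-x\cdot y$ trick, the result; but the empty-join argument is cleaner.)

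Next I would prove monotonicity (3). If $x\leqslant y$, then $y = x\vee y$, so by (\ref{jdp}),
\[
y\cdot z = (x\vee y)\cdot z = (x\cdot z)\vee(y\cdot z),
\]
which forces $x\cdot z\leqslant y\cdot z$. Statement (4) is then immediate: under $x\leqslant y$ and $u\leqslant v$, apply (3) to obtain $x\cdot u\leqslant y\cdot u$, then use commutativity (\ref{mlc}) together with (3) again to get $y\cdot u=u\cdot y\leqslant v\cdot y=y\cdot v$, and concatenate.

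Finally (1): since $y\leqslant 1$, part (3) gives $x\cdot y\leqslant x\cdot 1=x$ via the unit axiom (\ref{mid}); symmetrically, by commutativity, $x\cdot y = y\cdot x\leqslant y\cdot 1 = y$, so $x\cdot y$ is a common lower bound and hence $x\cdot y\leqslant x\wedge y$. There is no genuine obstacle here — the only subtle point is the handling of (2), where one must be comfortable applying the infinitary distributivity axiom to the empty index set; otherwise the argument is a straightforward unpacking of the axioms.
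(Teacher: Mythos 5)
Your argument is correct, and it reorganizes the paper's proof rather than reproducing it. The paper proves (1) first, directly from $1=y\vee 1$ (so $x=x\cdot(y\vee 1)=(x\cdot y)\vee x$, giving $x\cdot y\leqslant x$, and symmetrically $x\cdot y\leqslant y$), then gets (2) as an immediate corollary via $x\cdot 0\leqslant x\wedge 0=0$, and proves (3) and (4) as you do. You instead prove (3) first, deduce (1) from it using $y\leqslant 1$ and the unit axiom (which is essentially the same computation as the paper's, repackaged through monotonicity), and handle (2) by applying the distributivity axiom to the empty family. That last step is legitimate under the standard reading of axiom (3) over arbitrary index sets (the paper itself identifies multiplicative lattices with commutative unital quantales, where the empty join is included and $\bigvee\emptyset=0$), but it is the one place where your proof leans on a convention the paper's proof avoids; note that since your derivation of (1) does not use (2), you could sidestep the issue entirely by proving (1) first and then, as in the paper, concluding $x\cdot 0\leqslant x\wedge 0=0$. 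Your parenthetical ``alternative'' for (2) is not coherent as written (the expression $x\cdot(0\vee y)-x\cdot y$ involves a subtraction that does not exist in a lattice), but since you explicitly discard it in favour of the empty-join argument, this does not affect the validity of the proof.
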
 

\begin{proof}
1. Notice that $x=x\cdot  1=x\cdot  (y\vee 1)=(x\cdot  y)\vee x,$ and this implies $x\cdot  y\leqslant x$. Similarly, we have $x\cdot  y\leqslant y,$ and hence $x\cdot  y\leqslant x\wedge y.$
		
2. Applying 1., we obtain $x\cdot  0\leqslant x\wedge 0=0,$ and hence $x\cdot  0=0.$
	
3. Since $x\leqslant y$, we have $y\cdot  z=(x\vee y)\cdot  z=(x\cdot  z)\vee (y\cdot  z)$, which implies that $x\cdot  z\leqslant y\cdot  z.$
	
4. Since $x\leqslant y$ and $u\leqslant v$, by 3., we have $x\cdot  u\leqslant y\cdot  u$ and $y\cdot  u\leqslant y\cdot  v$. From these, we obtain the claim.
\end{proof} 

Let us now focus on some distinguished classes of elements of a multiplicative lattice $L$. An element $x$ of $L$ is called \emph{proper} if $x\neq 1$, and by $\mathrm{Prop}(L)$, we denote the set of all proper elements of $L$.
A proper element $p$ of a multiplicative lattice  $L$ is said to be \emph{prime} if $x\cdot y\leqslant p$ implies that $x\leqslant p$ or $y\leqslant p$ for all $x$, $y\in L$. The set of prime elements of $L$ is denoted by $\mathrm{Spec}(L)$. An element $p$ is said to be a \emph{minimal prime element} if there is no prime element
$q$ such that $q \leqslant p.$ A proper element $m$ of $L$ is said to be \emph{maximal}, if $m \leqslant n$, $n\leqslant 1$, and $n\neq 1$  then $m = n.$ By $\mathrm{Max}(L)$, we denote the set of all maximal elements of $L$. An element $c$ of $L$ is called \emph{compact} if $c\leqslant \bigvee_{\lambda \in \Lambda} x_{\lambda}$ implies $c\leqslant \bigvee_{i=1}^n x_{\lambda_i}$, for some $n\in \mathds{N}^+$. A multiplicative lattice $L$ is said to be \emph{compactly generated} if every element of
$L$ is the join of compact elements. If $L$ is a compactly generated multiplicative lattice
with $1$ compact, then maximal elements exist in $L$, and every maximal element is a prime element (see \cite{AAJ95}). We say a multiplicative lattice $L$ satisfies the \emph{max-bounded} property if every proper element of $L$ is bounded by a maximal element of $L$. An element $x$ of $L$ is called \emph{idempotent} if $x\cdot x=x$. 

The \emph{Jacobson radical} of $L$ is defined as the set \[\mathrm{Jac}(L):=\bigwedge\left\{m\mid m\in \mathrm{Max}(L) \right\}.\]   The \emph{radical} of an element $x$ of $L$ is defined as \[\sqrt{x}:=\bigvee\left\{y\in L\mid\,y\,\text{is compact},\, y^n\leqslant x,\; n\in \mathds{N}^+\right\}=\bigwedge\left\{p\in \mathrm{Spec}(L)\mid x\leqslant p\right\}.\] An element $r$ of $L$ is called a \emph{radical element} if $r=\sqrt{r}$. By $\mathrm{Rad}(L)$, we denote the set of all radical elements of $L$. An element $x$ of $L$ is called \emph{nilpotent} if $x^n=0$, for some $n\in \mathds{N}^+$. The set of all nilpotent elements of a multiplicative lattice  $L$ is denoted
by $\mathrm{Nil}(L).$ An element $q\in L$ is said to be \emph{primary} if $x\cdot y\leqslant q$ implies $x\leqslant q$ or $y\leqslant \sqrt{q}$, for all $x$, $y\in L$. We denote the set of all primary elements of $L$ by $\mathrm{Prim}(L)$. 

An element $s\in L$ is said to be \emph{strongly irreducible} if $x\wedge y \leqslant s$ implies $x\leqslant s$ or $y\leqslant s$, for all $x,$ $y\in L$. We denote all  strongly irreducible elements of $L$ by $\mathrm{Irr}^+(L)$. By Lemma \ref{bip}(\ref{mul}), it follows that every prime element is strongly irreducible. Similarly, an element $s$ of $L$ is called \emph{irreducible} if $x\wedge y=s$ implies $x=s$ or $y=s$. We denote the set of all irreducible elements of $L$ by $\mathrm{Irr}(L)$.  An element $s$ is said to be \emph{completely irreducible} if $s=\bigwedge_{\lambda \in \Lambda}x_{\lambda}$ implies $s=x_{\lambda}$, for all $\lambda \in \Lambda$, and by $\mathrm{Irr}^{++}(L),$ we denote the set of all such elements of $L$.  
We use the symbol $\Sigma_L$ to denote any one of the above-mentioned distinguished classes of elements of a multiplicative lattice $L$.

From the perspective of abstract ideal theory, an in-depth exploration of certain classes of elements, along with others, is available in \cite{And74, AJ84, AAJ95, Dil62, Dil36, War37, WD39}. While these classes play a role in the examination of topological properties of multiplicative lattices, the consideration of a specific class (or classes) of elements as the underlying set of a topological space has garnered attention more recently.

The spectrum of prime elements of a multiplicative lattice has been extensively investigated in \cite{FFJ22} (see also \cite{Fac23}). This work provides a sufficient condition for a spectrum to be a spectral space under the Zariski topology. The paper also discusses some topological aspects of radical, solvable, locally solvable, and semiprime elements.

In the context of the multiplicative lattice 
$\mathcal{N}(G)$ of all normal subgroups of a group 
$G$, \cite{FGT23} endows the Zariski spectrum 
$\mathrm{Spec}(G)$ with the Zariski topology. It is proven that the topological space 
$\mathrm{Spec}(G)$ is spectral if and only if it is compact, if and only if 
$G$ has a maximal normal subgroup.

The objective of this paper is to examine topological properties of distinct classes of elements of a multiplicative lattice endowed with coarse lower topology. Notably, the topology associated with each class aligns with the Zariski or Stone topologies when the elements in question are prime or maximal, respectively. This framework enables the comprehensive investigation of topological properties for any class of elements of a multiplicative lattice. Furthermore, it extends and generalizes certain results presented in \cite{DG22} and \cite{FGS23}.

\smallskip

\msection{Lower spaces}

Let $L$ be a multiplicative lattice . The coarse lower topology (also known as lower topology) on a distinguish class $\Sigma_L$ of elements of $L$ will be the topology for which the sets of the type 
$$
v(x):=\{l\in \Sigma_L\mid x\leqslant l\}
$$
(where $x\in L$)  form a subbasis of closed sets.
We call a class $\Sigma_L$ of elements of $L$ endowed with the lower topology, a \emph{lower space}.  In general, the collection $\{v(x)\}_{x\in L}$ does not form a closed-set topology on a $\Sigma_L$. The following result in fact characterizes such class of elements of a multiplicative lattice. The proof  is straightforward.

\begin{theorem}\label{hkt}
The collection $\{v(x)\}_{x\in L}$ induces a closed-set topology on  $\Sigma_L$  if and only if the class  $\Sigma_L$ of elements satisfies the following property:    
\begin{equation}\label{hkp}
x\wedge  y\leqslant s\;\; \Rightarrow\;\; x\leqslant s\;\; \text{or}\;\; y\leqslant s,	
\end{equation}
for all $x,$ $y\in L $, and for all $s\in  \Sigma_L.$  
\end{theorem}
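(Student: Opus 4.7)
The plan is to collapse the equivalence to the single identity $v(x) \cup v(y) = v(x \wedge y)$ and to recognize that its non-trivial inclusion is precisely property (\ref{hkp}).

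First I would dispatch the easy axioms of a closed-set topology. We have $\Sigma_L = v(0)$ (since $0 \leqslant l$ for every $l$); $\emptyset = v(1)$ (since every $l \in \Sigma_L$ is proper, $1 \leqslant l$ fails); and closure under arbitrary intersections follows from $\bigcap_{\lambda} v(x_\lambda) = v\bigl(\bigvee_{\lambda} x_\lambda\bigr)$, which holds because $x_\lambda \leqslant l$ for every $\lambda$ if and only if $\bigvee_\lambda x_\lambda \leqslant l$. The only axiom whose status is non-trivial is closure under binary unions.

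For that, I would establish the identity $v(x) \cup v(y) = v(x \wedge y)$ and observe that it is equivalent to (\ref{hkp}). The inclusion $v(x) \cup v(y) \subseteq v(x \wedge y)$ is unconditional: since $x \wedge y \leqslant x$ and $x \wedge y \leqslant y$, any $l$ lying above $x$ or above $y$ lies above $x \wedge y$. The reverse inclusion $v(x \wedge y) \subseteq v(x) \cup v(y)$, once unfolded, states exactly that for every $s \in \Sigma_L$ the relation $x \wedge y \leqslant s$ implies $x \leqslant s$ or $y \leqslant s$, which is property (\ref{hkp}) verbatim.

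Both directions now follow. If (\ref{hkp}) holds, then $v(x) \cup v(y) = v(x \wedge y)$ lies in $\{v(z) : z \in L\}$ and the collection is closed under binary unions, completing the verification. Conversely, if $\{v(x)\}$ forms a closed-set topology, then $v(x) \cup v(y) = v(z)$ for some $z \in L$; together with the always-valid containment $v(x) \cup v(y) \subseteq v(x \wedge y)$, identifying $v(z)$ with $v(x \wedge y)$ yields (\ref{hkp}). The main step requiring care is this last identification in the forward direction, where one has to ensure that the $z$ supplied by closure under unions has the same $v$-image as $x \wedge y$; the natural assignment $x \mapsto v(x)$ together with the sandwich $v(x) \cup v(y) \subseteq v(x \wedge y)$ should force the equality and complete the argument.
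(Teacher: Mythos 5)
Your reduction of the problem to closure under binary unions, via the identity $v(x)\cup v(y)=v(x\wedge y)$, is the natural "straightforward" argument the paper has in mind (it gives no proof), and your forward direction is essentially fine: $\Sigma_L=v(0)$, $\bigcap_\lambda v(x_\lambda)=v\bigl(\bigvee_\lambda x_\lambda\bigr)$, the unconditional inclusion $v(x)\cup v(y)\subseteq v(x\wedge y)$, and the observation that the reverse inclusion is property (\ref{hkp}) verbatim are all correct. One small caveat: your claim $\emptyset=v(1)$ presumes every element of $\Sigma_L$ is proper, which is not automatic for all the classes the paper allows (as defined, $1$ belongs to $\mathrm{Rad}(L)$, $\mathrm{Irr}(L)$, $\mathrm{Irr}^+(L)$, $\mathrm{Irr}^{++}(L)$), though this is a blemish the paper itself glosses over.

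The genuine gap is the final step of your converse, which you flag but do not close. From closure under binary unions you only obtain $v(x)\cup v(y)=v(z)$ for \emph{some} $z\in L$, and the sandwich $v(x)\cup v(y)\subseteq v(x\wedge y)$ gives $v(z)\subseteq v(x\wedge y)$, which is the wrong direction; nothing in the "natural assignment $x\mapsto v(x)$" forces $v(x\wedge y)\subseteq v(z)$, because $v$ is far from injective and $v(a)\subseteq v(b)$ does not imply $b\leqslant a$ for a general class (the paper needs $L=\mathrm{Rad}(L)$ for such a statement in Proposition \ref{hrx}). Concretely, take $L=\{0,a,b,1\}$ the four-element Boolean algebra with $x\cdot y=x\wedge y$ and $\Sigma_L=\{0\}$: then $\{v(x)\}_{x\in L}=\{\{0\},\emptyset\}$ is a closed-set topology on $\Sigma_L$, yet $a\wedge b\leqslant 0$ while $a\not\leqslant 0$ and $b\not\leqslant 0$, so (\ref{hkp}) fails. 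Hence the identification you hope to "force" cannot be achieved by formal manipulation alone: any correct proof of this direction must exploit membership of suitable elements of $L$ in $\Sigma_L$ to test the inclusion (for instance, for $\mathrm{Prop}(L)$ one has $x\in v(x)$ and $y\in v(y)$ whenever $x,y$ are proper, whence $z\leqslant x\wedge y$ and the argument closes), or must interpret the hypothesis as the specific identity $v(x)\cup v(y)=v(x\wedge y)$, in which case the converse is immediate. As written, the converse direction is unproven, and at the level of generality at which you (and the statement) work it cannot be completed in the way you sketch.
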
  

It is clear from the above Theorem \ref{hkt} that the class of strongly irreducible (or \emph{meet-irreducible}) elements of a multiplicative lattice is the ``largest'' class of elements for which the collection $\{v(x)\}_{x\in L}$ will induce a closed-set topology. It is also evident that the classes of prime, minimal prime, maximal elements are all subclasses of the class of strongly irreducible elements.

\begin{remark}
If $v(-)$ is a Kuratowski closure operator on a class $\Sigma_L$ of elements in $L$, the collection $\{v(x)\}_{x\in L}$ of subsets of $\Sigma_L$ are exactly the closed subsets of the topology induced on $\Sigma_L$. However, if $v(-)$ is not a Kuratowski closure operator then the collection $\{v(x)\}_{x\in L}$ does not form a closed base of a topology on $\Sigma_L.$	
\end{remark}

\begin{proposition}\label{hrrad}
Let $L$ be a multiplicative lattice and $\Sigma_L$ be a class of elements of $L$. Then the following are equivalent.
\begin{enumerate}
\item 
\label{hahrada}
$v(x )=v(\sqrt{x })$ for every $x \in L$.

\item
$v(x )=v(\sqrt{x })$ for every $x \in \Sigma_L$. 
 
\item 
$\Sigma_L=\mathrm{Rad}(L)$. 
\end{enumerate}
\end{proposition}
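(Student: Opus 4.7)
The plan is to establish the cycle $(1) \Rightarrow (2) \Rightarrow (3) \Rightarrow (1)$. The implication $(1) \Rightarrow (2)$ is immediate, since $\Sigma_L \subseteq L$ and the equality $v(x)=v(\sqrt{x})$ simply restricts.

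For $(2) \Rightarrow (3)$, I would fix an arbitrary $s \in \Sigma_L$ and apply the hypothesis to $x = s$, obtaining $v(s) = v(\sqrt{s})$. Since $s \leqslant s$ we have $s \in v(s)$, hence $s \in v(\sqrt{s})$, which gives $\sqrt{s} \leqslant s$. Combined with the general inequality $s \leqslant \sqrt{s}$ (immediate from the prime-intersection description of $\sqrt{\cdot}$, since $s$ lies below every prime in the defining family), this forces $s = \sqrt{s}$, so $s \in \mathrm{Rad}(L)$. This identifies the class $\Sigma_L$ with $\mathrm{Rad}(L)$.

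For $(3) \Rightarrow (1)$, assume $\Sigma_L = \mathrm{Rad}(L)$ and let $x \in L$. The inclusion $v(\sqrt{x}) \subseteq v(x)$ follows from $x \leqslant \sqrt{x}$. For the reverse, if $r \in v(x)$ then $r \in \mathrm{Rad}(L)$ and $x \leqslant r$; any prime dominating $r$ also dominates $x$ and thus appears in the meet defining $\sqrt{x}$, yielding $\sqrt{x} \leqslant \sqrt{r}$, and the radicality of $r$ gives $\sqrt{r} = r$. Hence $\sqrt{x} \leqslant r$, so $r \in v(\sqrt{x})$.

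I expect the substantive step to be $(2) \Rightarrow (3)$: it upgrades the a priori modest assumption—an equality of closed sets only for indices drawn from $\Sigma_L$—into the pointwise rigidity $s = \sqrt{s}$ that characterizes radical elements. The remaining two implications are routine bookkeeping with the monotonicity and idempotence of the operator $\sqrt{\cdot}$ on $L$.
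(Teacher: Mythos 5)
Your proposal is correct and takes essentially the same route as the paper: $(1)\Rightarrow(2)$ by restriction, $(2)\Rightarrow(3)$ by evaluating at $s\in v(s)$ to extract $\sqrt{s}\leqslant s$, and $(3)\Rightarrow(1)$ by using that every $r\in v(x)$ lies in $\Sigma_L$ and hence is radical, so $\sqrt{x}\leqslant\sqrt{r}=r$. One remark: your step $(2)\Rightarrow(3)$ (like the paper's own argument) literally establishes only the inclusion $\Sigma_L\subseteq\mathrm{Rad}(L)$, not the stated equality $\Sigma_L=\mathrm{Rad}(L)$, so the overstatement ``this identifies the class $\Sigma_L$ with $\mathrm{Rad}(L)$'' is a shortcoming you share with the published proof rather than a defect peculiar to yours.
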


\begin{proof}
1.$\Rightarrow$2. This is immediate because $\Sigma_L\subseteq L$.
	
2.$\Rightarrow$3. Let $x \in\Sigma_L$. Then $x \in v(x )$, and so, by 2, $x \in v(\sqrt{x })$. But this implies that $\sqrt{x }\leqslant x $, whence we have $x =\sqrt x $, showing that $x\in \mathrm{Rad}(L) $. 
	
3.$\Rightarrow$1. Let $x \in L$. If $v(x )=\emptyset$, then $v(\sqrt{x })=\emptyset$ because $v(\sqrt{x })\subseteq v(x )$, hence 	$v(x )=v(\sqrt{x })$. If $v(x )\neq \emptyset$, consider  any $y \in v(x )$. Then $y \in\Sigma_L$, and is therefore a radical element, by 3. Furthermore, $x \leqslant y $, which then implies that $\sqrt{x }\leqslant \sqrt{y }=y $, whence $y \in v(\sqrt{x })$. Therefore, $v(x )\subseteq v(\sqrt{x })$, and hence $v(x )=v(\sqrt{x })$, as required.  
\end{proof}  

From Proposition \ref{hrrad}, it is clear that  $v(x)=v(\sqrt{x})$ holds only for the radical elements of $L$. In the next result, we generalize the notion of radical of an element in such a fashion that the above identity  holds for all classes of elements of a multiplicative lattice.

\begin{proposition}\label{hrx} 
Suppose $L$ is a multiplicative lattice. Then for any class $\Sigma_L$ of elements of $L$ the following hold. 
\begin{enumerate}
\label{rxlr}  
\item\label{axaa} For every $x \in L,$ $x \leqslant \bigwedge v(x) .$
		
\item \label{axa}   
If $x \in\Sigma_L,$ then $x =\bigwedge v(x) $.
		
\item \label{hahxa} For all $x  \in L,$ $v(x )=v(\bigwedge v(x) ).$
		
\item\label{xbxa}  $v(x )\subseteq v(y )$ if and only if  $\bigwedge v(y) \subseteq\bigwedge v(x) $, for all $x ,$ $y \in L$.
		
\item\label{regc} If $L=\mathrm{Rad}(L),$ then  $v(x )\subseteq v(y )$ if and only if  $y \leqslant x $, for all $x ,$ $y \in L$.
\end{enumerate}
\end{proposition}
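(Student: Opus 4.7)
All five items reduce to unwinding the definition $v(x)=\{l\in\Sigma_L\mid x\leqslant l\}$ together with the universal property of the meet in $L$; the first four are straightforward bookkeeping on this Galois-style interaction between $x\mapsto v(x)$ and $S\mapsto\bigwedge S$, while (5) is the only non-routine item.

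For (1)--(3) I would argue as follows. In (1), every $l\in v(x)$ satisfies $x\leqslant l$ by definition, so $x$ is a lower bound of $v(x)\subseteq L$ and hence $x\leqslant\bigwedge v(x)$. In (2), if $x\in\Sigma_L$ then $x\in v(x)$, giving $\bigwedge v(x)\leqslant x$; combining with (1) yields equality. In (3), the inclusion $v(x)\subseteq v(\bigwedge v(x))$ holds because each $l\in v(x)$ dominates $\bigwedge v(x)$ by the universal property of the meet; the reverse inclusion uses (1) to chain $x\leqslant\bigwedge v(x)\leqslant l$ for every $l\in v(\bigwedge v(x))$.

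For (4), the forward direction is immediate: if $v(x)\subseteq v(y)$, then every $l\in v(x)$ also lies in $v(y)$, so $\bigwedge v(y)\leqslant l$; taking the infimum over $l\in v(x)$ yields $\bigwedge v(y)\leqslant\bigwedge v(x)$. The converse chains $y\leqslant\bigwedge v(y)\leqslant\bigwedge v(x)\leqslant l$ (using (1) applied to $y$) for each $l\in v(x)$, placing $l$ in $v(y)$.

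For (5), the direction $y\leqslant x\Rightarrow v(x)\subseteq v(y)$ is pure transitivity. The converse uses (4), which gives $\bigwedge v(y)\leqslant\bigwedge v(x)$, and the real task becomes showing $\bigwedge v(x)=x$ for every $x\in L$ under the hypothesis $L=\mathrm{Rad}(L)$. This is the one step I expect to require care: invoking the radicality of every $x\in L$ together with the formula $\sqrt{x}=\bigwedge\{p\in\mathrm{Spec}(L)\mid x\leqslant p\}$ (and, where needed, Proposition \ref{hrrad}) to obtain $\bigwedge v(x)\leqslant x$, which combined with (1) yields equality. Substituting this collapse into $\bigwedge v(y)\leqslant\bigwedge v(x)$ then delivers $y\leqslant x$, closing the proof.
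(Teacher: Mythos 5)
Items (1)--(4) of your proposal are correct and follow essentially the paper's own route: (1) and (2) are the same ``lower bound plus membership'' observations the paper dismisses as straightforward, and your arguments for (3) and (4) are exactly the chains of inequalities the paper writes out, so there is nothing to add there.

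The genuine gap is in item (5), which you flag yourself but do not close. The crux is the inequality $\bigwedge v(x)\leqslant x$ for every $x\in L$, and the route you sketch---radicality of $x$ together with the formula $\sqrt{x}=\bigwedge\{p\in\mathrm{Spec}(L)\mid x\leqslant p\}$ and Proposition \ref{hrrad}---does not deliver it for a general class $\Sigma_L$: that formula is a meet over \emph{prime} elements, whereas $\bigwedge v(x)$ is a meet over the members of $\Sigma_L$ above $x$, and without a hypothesis tying $\Sigma_L$ to $\mathrm{Spec}(L)$ there is no inequality between the two in the direction you need. Concretely, on the chain $0<m<1$ with $x\cdot y=x\wedge y$ every element is radical, yet for $\Sigma_L=\mathrm{Max}(L)=\{m\}$ one has $\bigwedge v(0)=m\not\leqslant 0$, and indeed $v(0)\subseteq v(m)$ while $m\not\leqslant 0$; so no argument valid for an arbitrary class $\Sigma_L$ can establish the collapse $\bigwedge v(x)=x$, and the biconditional in (5) itself needs the class to be constrained. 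The paper's proof simply asserts $\bigwedge v(x)=x$ from the hypothesis $L=\mathrm{Rad}(L)$, i.e.\ it is implicitly reading item (5) in the setting of Proposition \ref{hrrad}, where $\Sigma_L=\mathrm{Rad}(L)$, so that $\Sigma_L=L$ and the equality is just item (2) applied to every $x\in L$. To repair your argument, either adopt that reading and quote item (2) directly, or add an explicit hypothesis such as $\mathrm{Spec}(L)\subseteq\Sigma_L$, under which your intended chain $x\leqslant\bigwedge v(x)\leqslant\sqrt{x}=x$ does go through; as written, the key step is neither proved nor provable at the stated level of generality.
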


\begin{proof}
1.-2. Straightforward.
	
3. By 1., $x \leqslant \bigwedge v(x) ,$ and hence we have $v(x )\supseteq v(\bigwedge v(x) ).$ The other inclusion is obvious.
	
4. Suppose that $\bigwedge v(y)\subseteq \bigwedge v(x) $, and  $z\in v(x )$. Then $z\in \Sigma_L$ and $x \leqslant z$, so that $\bigwedge v(x) \leqslant z$. Therefore, 
\[
z\in v(z)\subseteq v\left(\bigwedge v(x) \right)\subseteq v\left(\bigwedge v(y)\right)\subseteq v(y ),
\]
showing that $v(x )\subseteq v(y )$. The other implication is obvious.
	
5. Since $L=\mathrm{Rad}(L),$ we have  $v(x )\subseteq v(y )$ implies $y \leqslant \bigwedge v(x )=x .$
The other implication follows from monotonicity of $v$.
\end{proof}

\begin{lemma}\label{lfc}
Let $L$ be a compactly generated multiplicative lattice and satisfies the ``max-bounded'' condition. A lower space $\Sigma_L$ have the property: $v(x)=\emptyset\Leftrightarrow x=L$ if and only if $\Sigma_L$ contains all maximal elements of $L$.
\end{lemma}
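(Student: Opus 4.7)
The plan is to prove the two implications separately, with the key observation that the stated property ``$v(x)=\emptyset\Leftrightarrow x=1$'' (I read the ``$x=L$'' in the statement as a typo for ``$x=1$'', the top of $L$) implicitly presupposes $\Sigma_L\subseteq\mathrm{Prop}(L)$, since otherwise $1\in v(1)$ and the forward half of the biconditional would fail outright. Granting this standard convention, the max-bounded hypothesis will deliver one direction and the defining property of maximal elements the other. The compactly generated assumption does not seem to enter the argument directly; it functions as a standing hypothesis guaranteeing via \cite{AAJ95} that $\mathrm{Max}(L)$ is actually populated, so that the equivalence is not vacuously empty.

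For the ``if'' direction, I would assume $\mathrm{Max}(L)\subseteq\Sigma_L$. The implication $x=1\Rightarrow v(x)=\emptyset$ is immediate, since no proper element of $\Sigma_L$ can satisfy $1\leqslant l$. For the converse, I would take $x\in L$ with $x\neq 1$, invoke the max-bounded condition to produce some $m\in\mathrm{Max}(L)$ with $x\leqslant m$, and observe that $m\in\Sigma_L$ by hypothesis, so that $m\in v(x)$ and hence $v(x)\neq\emptyset$.

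For the ``only if'' direction, I would fix an arbitrary $m\in\mathrm{Max}(L)$ and show $m\in\Sigma_L$. Since $m$ is proper, $m\neq 1$, so the hypothesized equivalence forces $v(m)\neq\emptyset$ and therefore yields some proper $l\in\Sigma_L$ with $m\leqslant l$. Maximality of $m$ now pinches $l=m$, so $m\in\Sigma_L$. I do not anticipate a real obstacle here; the argument is short and essentially formal. The only delicate point is keeping track of the convention $\Sigma_L\subseteq\mathrm{Prop}(L)$, which is needed both to secure $v(1)=\emptyset$ in the sufficiency direction and to squeeze $l=m$ out of maximality in the necessity direction.
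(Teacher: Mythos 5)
Your proof is correct and follows essentially the same route as the paper's: the sufficiency direction uses the max-bounded condition to produce a maximal element above any proper $x$, and the necessity direction uses maximality to force any element of $v(m)$ to equal $m$ (the paper phrases this contrapositively, noting $v(m)=\emptyset$ if $m\notin\Sigma_L$). Your explicit handling of the conventions ($x=L$ meaning $x=1$, and $\Sigma_L\subseteq\mathrm{Prop}(L)$) matches what the paper's proof tacitly assumes.
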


\begin{proof}
Suppose $\Sigma_L$ has the above property. Let $m$ be a maximal element of $R$. If $m\notin\Sigma_L$, then we would have $v(m) = \emptyset$, and since $m \neq L$, we would have a contradiction.
Conversely, suppose $\Sigma_L$ contains all maximal elements of $L$. Consider an element $x$ of $L$ such that $v(x) = \emptyset$. This means that there is no proper element of $L$ belonging to $\Sigma_L$ containing $x$, and this forces $x$ to be $L$ because every  proper element of $L$ is contained in some maximal element (this follows from the  ``max-bounded'' condition).
\end{proof}

\begin{proposition}\label{csb}
Let $L$ be a compactly generated multiplicative lattice with $1$ compact, and satisfies the max-bounded property. If a class $\Sigma_L$ of elements of $L$ contains the maximal elements of $L$, then the lower space $\Sigma_L$ is compact.
\end{proposition}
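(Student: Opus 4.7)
The plan is to invoke the Alexander subbase theorem. Since $\{v(x)\}_{x\in L}$ is a subbase of closed sets for the lower topology on $\Sigma_L$, the family $\{\Sigma_L\setminus v(x)\}_{x\in L}$ is a subbase of open sets. It therefore suffices to show that every cover of $\Sigma_L$ by subbasic open sets admits a finite subcover.

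First I would translate such a cover into its closed-set form. Suppose
\[
\Sigma_L=\bigcup_{\lambda\in\Lambda}\bigl(\Sigma_L\setminus v(x_\lambda)\bigr),
\]
so that $\bigcap_{\lambda\in\Lambda}v(x_\lambda)=\emptyset$. A short computation from the definition of $v$ shows that
\[
\bigcap_{\lambda\in\Lambda}v(x_\lambda)=v\Bigl(\bigvee_{\lambda\in\Lambda}x_\lambda\Bigr),
\]
because $l$ lies in each $v(x_\lambda)$ iff every $x_\lambda\leqslant l$ iff $\bigvee_\lambda x_\lambda\leqslant l$. Hence $v(\bigvee_\lambda x_\lambda)=\emptyset$.

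Next I would apply Lemma \ref{lfc}: because $\Sigma_L$ contains every maximal element of $L$, the hypotheses of that lemma are in force and the emptiness of $v(\bigvee_\lambda x_\lambda)$ forces $\bigvee_\lambda x_\lambda=1$. Now the compactness of $1$ enters: from $1\leqslant\bigvee_{\lambda\in\Lambda}x_\lambda$ we extract indices $\lambda_1,\ldots,\lambda_n$ with $1\leqslant\bigvee_{i=1}^n x_{\lambda_i}$, and so $\bigvee_{i=1}^n x_{\lambda_i}=1$. Applying Lemma \ref{lfc} again yields $\bigcap_{i=1}^n v(x_{\lambda_i})=v(1)=\emptyset$, equivalently
\[
\Sigma_L=\bigcup_{i=1}^n\bigl(\Sigma_L\setminus v(x_{\lambda_i})\bigr),
\]
which is the required finite subcover. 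I expect no real obstacle here; the only subtlety is the clean identification $\bigcap_\lambda v(x_\lambda)=v(\bigvee_\lambda x_\lambda)$, and the crucial role played by the combination of the max-bounded hypothesis (through Lemma \ref{lfc}) with the compactness of $1$ (to pass from an arbitrary join to a finite one).
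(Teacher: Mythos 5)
Your proposal is correct and follows essentially the same route as the paper: both invoke the Alexander subbasis theorem, use the identity $\bigcap_{\lambda}v(x_\lambda)=v\bigl(\bigvee_{\lambda}x_\lambda\bigr)$, deduce $\bigvee_{\lambda}x_\lambda=1$ from Lemma \ref{lfc}, and then use compactness of $1$ to pass to a finite join. If anything, your final step (applying the lemma again to conclude $\bigcap_{i=1}^n v(x_{\lambda_i})=\emptyset$) makes explicit a point the paper leaves implicit.
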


Note that the assumption on $L$ guarantees the existence of maximal elements. Since our topology is generated by subbasis closed sets, in the following proof we rely on Alexander subbasis theorem.

\begin{proof}
Suppose $\{C_{\lambda}\}_{\lambda \in \Lambda}$ is a collection of subbasis closed sets of $\Sigma_L$ with $\bigwedge_{\lambda \in \Lambda}C_{\lambda}=\emptyset.$ Then there exists $\{x_{\lambda}\}_{\lambda \in \Lambda}\subseteq L$ such that $v(x_{\lambda})=C_{\lambda}$, for $\lambda \in \Lambda$. This implies that
\[\emptyset=\bigwedge_{\lambda \in \Lambda}v(x_{\lambda})=v\left( \bigvee_{\lambda \in \Lambda} x_{\lambda} \right),\]
where the second equality indeed holds:
\[ l\in v\left( \bigvee_{\lambda \in \Lambda} x_{\lambda} \right) \Leftrightarrow \bigvee_{\lambda \in \Lambda} x_{\lambda}\leqslant l\Leftrightarrow (\lambda \in \Lambda) \; x_{\lambda}\leqslant l  \Leftrightarrow (\lambda \in \Lambda) \;l\in v(x_{\lambda})\Leftrightarrow l \in \bigwedge_{\lambda \in \Lambda} v(x_{\lambda}).\]
By Lemma \ref{lfc}, this implies $\bigvee_{\lambda \in \Lambda} x_{\lambda}=L.$  Since  $1$ is compact, there exists a finite subset $\{\lambda_i\}_{i=1}^n$ of $\Lambda$ such that $1=\bigvee_{i=1}^n x_{\lambda_i}.$ This proves the finite intersection property. Hence by Alexander subbasis theorem, we have the desired compactness.
\end{proof}

\begin{corollary}\label{ciqc}
Let $L$ be a compactly generated multiplicative lattice with $1$ compact, and satisfies the max-bounded property. Then the lower spaces $\mathrm{Spec}(L)$, $\mathrm{Max}(L)$, $\mathrm{Irr}(L)$, $\mathrm{Irr}^+(L)$, $\mathrm{Irr}^{++}(L),$ $\mathrm{Rad}(L)$, $\mathrm{Prim}(L)$ are all compact.
\end{corollary}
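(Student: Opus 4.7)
The plan is to apply Proposition \ref{csb} directly to each of the seven classes. Since that proposition delivers compactness as soon as $\Sigma_L$ contains every maximal element of $L$, the whole task reduces to checking, class by class, the inclusion $\mathrm{Max}(L)\subseteq \Sigma_L$. The hypotheses on $L$ (compactly generated with $1$ compact) ensure that $\mathrm{Max}(L)$ is nonempty and, as cited from \cite{AAJ95} in the preliminaries, that $\mathrm{Max}(L)\subseteq \mathrm{Spec}(L)$. So I would use this as the anchor and derive all other inclusions from it.

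For $\mathrm{Irr}^+(L)$ and $\mathrm{Irr}(L)$: the remark after the definition of strongly irreducible (a consequence of Lemma \ref{bip}(\ref{mul})) gives $\mathrm{Spec}(L)\subseteq\mathrm{Irr}^+(L)$; and a strongly irreducible element is trivially irreducible (the hypothesis $x\wedge y=s$ implies $x\wedge y\leqslant s$, hence $x\leqslant s$ or $y\leqslant s$, which combined with $s\leqslant x$ and $s\leqslant y$ forces $x=s$ or $y=s$), so $\mathrm{Irr}^+(L)\subseteq \mathrm{Irr}(L)$. For $\mathrm{Irr}^{++}(L)$, if $m\in\mathrm{Max}(L)$ and $m=\bigwedge_{\lambda\in\Lambda}x_\lambda$, then each $x_\lambda\geqslant m$, so by maximality $x_\lambda\in\{m,1\}$; since not every $x_\lambda$ can be $1$ (otherwise the meet equals $1\neq m$), at least one $x_\lambda$ equals $m$, giving $m\in\mathrm{Irr}^{++}(L)$.

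For $\mathrm{Rad}(L)$: any prime $p$ lies in the set $\{q\in\mathrm{Spec}(L)\mid p\leqslant q\}$, hence $\sqrt{p}\leqslant p$, and the reverse inequality follows from Proposition \ref{hrx}(\ref{axaa}); thus $p=\sqrt{p}$ and $\mathrm{Spec}(L)\subseteq \mathrm{Rad}(L)$. For $\mathrm{Prim}(L)$: if $p$ is prime and $x\cdot y\leqslant p$, then $x\leqslant p$ or $y\leqslant p\leqslant\sqrt{p}$, so $p$ is primary and $\mathrm{Spec}(L)\subseteq \mathrm{Prim}(L)$.

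Each of these inclusions chains back to $\mathrm{Max}(L)$, so Proposition \ref{csb} applies and yields compactness for all seven spaces. There is no real obstacle here; the only potentially delicate point is the completely irreducible case, where one must rule out the degenerate decomposition in which every factor is $1$, and this is handled by the observation above.
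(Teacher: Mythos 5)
Your proposal is correct and is exactly the argument the paper intends: the corollary is meant to follow from Proposition \ref{csb} by checking that each of the seven classes contains $\mathrm{Max}(L)$ (using $\mathrm{Max}(L)\subseteq\mathrm{Spec}(L)$ from \cite{AAJ95} and the elementary inclusions you list). Your verifications, including the careful treatment of the completely irreducible case, fill in precisely the details the paper leaves implicit.
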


While Proposition \ref{csb} presents a sufficient condition for a multiplicative lattice space to possess compactness, the subsequent proposition delivers a characterization of these spaces. However, before proceeding with the proposition, we need  a lemma.

\begin{lemma}\label{upper-bound}
If $X$ is a compact, T$_0$-space, then every chain in $X$ has an upper bound. 
\end{lemma}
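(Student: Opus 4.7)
The plan is to turn the chain into a family of closed sets satisfying the finite intersection property and then invoke compactness. Equip $X$ with its specialization order $\leqslant$, where $x\leqslant y$ iff $y\in\overline{\{x\}}$ (equivalently $\overline{\{y\}}\subseteq\overline{\{x\}}$); the T$_0$ hypothesis makes this preorder antisymmetric, so the word \emph{chain} unambiguously refers to a totally ordered subset under $\leqslant$, and \emph{upper bound} is likewise unambiguous.

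Let $C\subseteq X$ be a chain, and for each $x\in C$ set $F_x:=\overline{\{x\}}$. I would first verify that $\{F_x\}_{x\in C}$ has the finite intersection property: for any finitely many $x_1,\ldots,x_n\in C$, totality of the chain allows us to relabel so that $x_1\leqslant\cdots\leqslant x_n$; then the defining condition $x_i\leqslant x_n$ reads $x_n\in F_{x_i}$ for every $i$, so $x_n\in\bigcap_{i=1}^n F_{x_i}$, which is therefore nonempty.

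Compactness of $X$ then forces $\bigcap_{x\in C}F_x\neq\emptyset$; choose any $y$ in this intersection. For every $x\in C$, the membership $y\in\overline{\{x\}}$ is exactly $x\leqslant y$, so $y$ is the desired upper bound of $C$. The argument is short and I do not anticipate a genuine obstacle; the one point deserving care is fixing the direction of the specialization order at the outset, so that \emph{upper bound} corresponds to membership in the intersection of the closures $\overline{\{x\}}$ rather than to some dual condition involving unions or generic points of the chain.
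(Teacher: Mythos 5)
Your argument is correct: the closures $\overline{\{x\}}$ of the points of the chain form a family of closed sets with the finite intersection property, compactness yields a point in their intersection, and that point is an upper bound in the specialization order (T$_0$ only being needed to make the preorder a partial order). The paper states this lemma without proof, so there is nothing to compare against; yours is the standard argument one would supply. Your care about the direction of the specialization order is well placed and your choice is the right one for this paper: in a lower space $\overline{\{x\}}=v(x)$ (Lemma \ref{irrc}), so the specialization order $x\leqslant y \Leftrightarrow y\in\overline{\{x\}}$ coincides with the lattice order on $\Sigma_L$, which is exactly what the application in Proposition \ref{cqc} (Zorn's lemma on $\Sigma_L'=v(x)\wedge\Sigma_L$) requires.
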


\begin{proposition}\label{cqc}
Suppose $L$ is a compactly generated multiplicative lattice with $1$ compact, and let $\Sigma_L$ be a class of elements of $L$. Suppose $\mathrm{Max}(\Sigma_L)$ denotes the set of maximal elements of $\Sigma_L$. Then the following are equivalent: 
\begin{enumerate}
\item $\Sigma_L$ is compact;
		
\item for every $x\in\Sigma_L,$ there exists an $m\in\mathrm{Max}(\Sigma_L)$ such that $x\leqslant m$, and $\mathrm{Max}(\Sigma_L)$ is compact.
\end{enumerate}
\end{proposition}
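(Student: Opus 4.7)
The plan is to use Alexander's subbasis theorem in both directions, combined with Lemma \ref{upper-bound} to supply a Zorn-type argument. Note first that $\Sigma_L$ is $T_0$: if $a\neq b$ in $\Sigma_L$, then $a\not\leqslant b$ (say), and $v(a)$ is a closed set separating them.

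For \textbf{(1)$\Rightarrow$(2)}, fix $x\in\Sigma_L$. The subbasic closed set $v(x)$ is closed in the compact space $\Sigma_L$, hence compact, and $T_0$ as a subspace, so Lemma \ref{upper-bound} yields an upper bound in $v(x)$ for every chain in $v(x)$. Zorn's lemma then produces a maximal element $m$ of $v(x)$; any $y\in\Sigma_L$ with $m\leqslant y$ automatically lies in $v(x)$ (since $x\leqslant m\leqslant y$), so maximality of $m$ inside $v(x)$ forces $y=m$, giving $m\in\mathrm{Max}(\Sigma_L)$ and $x\leqslant m$. For the compactness of $\mathrm{Max}(\Sigma_L)$ I would invoke Alexander's theorem: given a subbasis family $\{v(x_\lambda)\cap\mathrm{Max}(\Sigma_L)\}_{\lambda\in\Lambda}$ of closed sets with the finite intersection property in $\mathrm{Max}(\Sigma_L)$, each finite sub-intersection in $\Sigma_L$ is a fortiori nonempty, so compactness of $\Sigma_L$ yields a point $y\in\bigcap_{\lambda\in\Lambda} v(x_\lambda)$; the previous paragraph then upgrades $y$ to some $m\in\mathrm{Max}(\Sigma_L)\cap\bigcap_{\lambda\in\Lambda} v(x_\lambda)$.

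For \textbf{(2)$\Rightarrow$(1)}, Alexander's theorem again reduces the problem to transferring the finite intersection property to $\mathrm{Max}(\Sigma_L)$. Given a subbasis family $\{v(x_\lambda)\}_{\lambda\in\Lambda}$ of closed sets in $\Sigma_L$ with the finite intersection property, any $y$ in a given finite sub-intersection can, by hypothesis, be refined to some $m\in\mathrm{Max}(\Sigma_L)$ with $y\leqslant m$; then $m$ lies in that sub-intersection intersected with $\mathrm{Max}(\Sigma_L)$. Hence $\{v(x_\lambda)\cap\mathrm{Max}(\Sigma_L)\}_{\lambda\in\Lambda}$ has the finite intersection property in $\mathrm{Max}(\Sigma_L)$, whose compactness supplies a point in $\bigcap_{\lambda\in\Lambda} v(x_\lambda)\cap\mathrm{Max}(\Sigma_L)\subseteq\bigcap_{\lambda\in\Lambda} v(x_\lambda)$.

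The main subtlety is keeping track of the order with respect to which Lemma \ref{upper-bound} is applied: since the specialization order of the lower topology is the reverse of the order inherited from $L$, one must ensure that Zorn's lemma produces maximal elements in the latter order, matching the definition of $\mathrm{Max}(\Sigma_L)$. If doubt persists, the required fact follows directly from compactness of $v(x)$: any chain $C\subseteq v(x)$ (in the order of $L$) gives a family $\{v(c)\}_{c\in C}$ of closed sets with the finite intersection property (each finite sub-chain has a maximum), so $\bigcap_{c\in C} v(c)\neq\emptyset$, and its elements are precisely the $\leqslant$-upper bounds of $C$.
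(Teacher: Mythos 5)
Your proof is correct, and its skeleton matches the paper's where it matters most: the existence of a maximal element above a given $x\in\Sigma_L$ is obtained exactly as in the paper, from compactness of the closed set $v(x)\cap\Sigma_L$, chain upper bounds (Lemma \ref{upper-bound}), and Zorn's lemma. Where you diverge is in transferring compactness between $\Sigma_L$ and $\mathrm{Max}(\Sigma_L)$: the paper argues with open covers, using (implicitly) that lower-topology open sets are down-sets, so that a finite subcover of $\mathrm{Max}(\Sigma_L)$ already covers $\Sigma_L$, and conversely an open cover of $\mathrm{Max}(\Sigma_L)$ is an open cover of all of $\Sigma_L$; you instead work dually with the subbasic closed sets $v(x_\lambda)$, the finite intersection property, and Alexander's subbasis theorem, exploiting that each $v(x_\lambda)$ is an up-set, so a point of a (finite) intersection can be pushed up to a maximal element lying in the same intersection. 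The two routes are equivalent in substance; the paper's cover argument is a little more economical (no appeal to Alexander in that direction), while yours makes the monotonicity driving the proof explicit and, usefully, patches two points the paper glosses over: that a maximal element of $v(x)\cap\Sigma_L$ is in fact maximal in all of $\Sigma_L$ (your remark that any $y\in\Sigma_L$ with $m\leqslant y$ stays in $v(x)$), and the question of which order Lemma \ref{upper-bound} refers to, which you sidestep entirely with the direct argument that $\bigcap_{c\in C}v(c)\neq\emptyset$ for a chain $C$, its elements being the required upper bounds.
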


\begin{proof}
To show 2.$\Rightarrow$1., let $\mathcal{U}$ be an open cover of $\Sigma_L$. Then, $\mathcal{U}$ also covers $\mathrm{Max}(\Sigma_L)$, and thus there is a finite subcover $\mathcal{U}'$ of $\mathrm{Max}(\Sigma_L)$. Consider $x\in\Sigma_L$. By hypothesis, there is $m\in\mathrm{Max}(\Sigma_L)$ such that $x\leqslant m$, and $U\in\mathcal{U}'$ such that $m\in U$. Then, $x\in U$, and thus $\mathcal{U}'$ is also a finite subcover for $\Sigma_L$. Hence, $\Sigma_L$ is compact.

Next we prove that 1.$\Rightarrow$2. Suppose that $x\in\Sigma_L$, and consider \[\Sigma_L':=v(x)\wedge \Sigma_L.\] Then, $\Sigma_L'$ is a closed subset of $\Sigma_L$, and thus it is compact; by Lemma \ref{upper-bound}, every ascending chain in $\Sigma_L'$ is bounded, and hence by Zorn's Lemma $\Sigma_L'$ has maximal elements, that is, $x\leqslant m,$ for some $m\in\mathrm{Max}(\Sigma_L)$.
Now, suppose $\mathcal{U}$ is an open cover of $\mathrm{Max}(\Sigma_L)$. Then, $\mathcal{U}$ is also an open cover of $\Sigma_L$, and thus it admits a finite subcover, which will be also a finite subcover of $\mathrm{Max}(\Sigma_L)$. Thus, $\mathrm{Max}(\Sigma_L)$ is compact.
\end{proof}

Next we investigate the lower separation axioms of lower spaces, specifically $T_0$, $T_1$, and sobriety. Notably, sobriety will hold significant importance as it aids in characterizing lower spaces that are spectral.

\begin{proposition}\label{t0a}
Every lower space of a multiplicative lattice is $T_0$. 
\end{proposition}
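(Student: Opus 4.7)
The plan is to verify the $T_0$ axiom directly from the definition of the subbasis $\{v(x)\}_{x\in L}$ of closed sets. Recall that a space is $T_0$ if, given any two distinct points, at least one of them has an open neighborhood not containing the other; it suffices to exhibit a subbasis closed set containing exactly one of the two points.

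Given distinct $s,t \in \Sigma_L$, antisymmetry of the order on $L$ prevents both $s \leqslant t$ and $t \leqslant s$ from holding simultaneously, so without loss of generality I may assume $s \not\leqslant t$. The plan is then to take the particular subbasis closed set $v(s) = \{l \in \Sigma_L \mid s \leqslant l\}$. Since $s \leqslant s$ trivially, we have $s \in v(s)$; on the other hand, the assumption $s \not\leqslant t$ gives $t \notin v(s)$. Hence $\Sigma_L \setminus v(s)$ is an open neighborhood of $t$ that excludes $s$, which establishes the $T_0$ separation.

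There is essentially no obstacle here: the argument uses only the definition of the lower topology together with antisymmetry of $\leqslant$, and it works uniformly for every distinguished class $\Sigma_L$ irrespective of whether $\{v(x)\}_{x\in L}$ forms a full closed-set base (cf.\ Theorem \ref{hkt}), because the $T_0$ condition may be tested on any subbasis. Thus no hypothesis on $\Sigma_L$ beyond being a subset of $L$ is needed.
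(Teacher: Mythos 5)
Your proof is correct and rests on the same ingredients as the paper's: the observation that $t\in v(s)$ iff $s\leqslant t$ together with antisymmetry of $\leqslant$. The paper phrases it contrapositively (if $v(x)=v(y)$ then $x\leqslant y$ and $y\leqslant x$, so $x=y$), while you exhibit the separating open set $\Sigma_L\setminus v(s)$ directly; this is only a cosmetic difference.
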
 

\begin{proof}
Let $x,$ $y\in \Sigma_L$ such that $v(x)=v(y).$  Since $y\in v(x),$ we have $x\leqslant y$. Similarly, we obtain $y\leqslant x$. Hence, $x=y$, and we have the claim.  	
\end{proof}

The following auxiliary result is going to be useful in further investigation of separation axioms as well as in studying connectedness of lower spaces.

\begin{lemma}\label{irrc}
Every subbasic closed set of the form $\left\{v(x)\mid x\in v(x)\right\}$ of a lower space $\Sigma_L$ is irreducible. 
\end{lemma}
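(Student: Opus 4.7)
The plan is to identify, for each $x\in\Sigma_L$, the subbasic closed set $v(x)$ with the closure $\overline{\{x\}}$ of the singleton $\{x\}$ in the lower topology. Once this identification is in hand, the irreducibility of $v(x)$ follows from the general fact that the closure of a single point is irreducible in any topological space: if $\overline{\{x\}}=C_1\cup C_2$ with both $C_i$ closed, then $x$ lies in one of them, say $C_1$, and hence $\overline{\{x\}}\subseteq C_1\subseteq\overline{\{x\}}$, forcing $C_1=\overline{\{x\}}$.

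The inclusion $\overline{\{x\}}\subseteq v(x)$ is automatic, since $v(x)$ is closed and contains $x$ by the hypothesis $x\in v(x)$. For the reverse inclusion, I would take $y\in v(x)$, so that $x\leqslant y$, and show that every open neighborhood of $y$ contains $x$. Since the topology has $\{v(z)\mid z\in L\}$ as a closed subbasis, a basic open neighborhood of $y$ has the form $U=\bigcap_{i=1}^{n}\bigl(\Sigma_L\setminus v(z_i)\bigr)$ for suitable $z_1,\dots,z_n\in L$, and the condition $y\in U$ translates into $z_i\not\leqslant y$ for every $i$. If $x$ did not lie in $U$, then $z_j\leqslant x$ for some $j$, whence $z_j\leqslant y$ by transitivity, a contradiction. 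Hence $x\in U$, proving $y\in\overline{\{x\}}$ and thus $v(x)=\overline{\{x\}}$.

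The irreducibility of $v(x)$ then follows at once from the preliminary observation. I do not anticipate a substantial obstacle here; the only point that demands a little care is that the lower topology is specified via a closed subbasis, so one has to be explicit about the shape of basic open sets when verifying that $x$ belongs to every open neighborhood of $y$.
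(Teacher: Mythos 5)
Your proposal is correct and follows essentially the same route as the paper: both arguments establish the identity $v(x)=\cl(\{x\})$ for $x\in v(x)$ and then invoke the irreducibility of point closures. The only difference is presentational --- you verify the inclusion $v(x)\subseteq\cl(\{x\})$ by checking that every basic open neighbourhood of a point $y\geqslant x$ contains $x$, while the paper works dually with the decomposition of $\cl(\{x\})$ into intersections of finite unions of subbasic closed sets; the key order-theoretic step ($z\leqslant x\leqslant y$ forces $y\in v(z)$) is the same in both.
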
 

\begin{proof} 
We show more, namely, $v(x)=\cl(x)$ for every $x$ such that $x\in v(x)$. By definition of $\cl(\cdot)$ and hypothesis,	 we have  $\cl(x)\subseteq v(x)$. 
We obtain the other inclusion by considering the following two cases. Suppose
$\cl(x)= \Sigma_L$. Then  we have the claim from the fact that
\[
\Sigma_L=\cl(x)\subseteq v(x)\subseteq \Sigma_L.
\]
Now suppose $\cl(x)\neq \Sigma_L$. Since $\cl(x)$ is a closed set, there exists an index set $\Omega$ such that for each $\omega\in\Omega$, there is a positive integer $n_{\omega}$ and $x_{\omega 1},\dots, x_{\omega n_\omega}\in L$ such that 
\[
\cl(x)={\bigwedge_{\omega\in\Omega}}\left({\bigvee_{ i\,=1}^{ n_\omega}}v(x_{\omega i})\right).
	\]	
Since 
$\cl(x)\neq \Sigma_L$, we may assume that  ${\bigvee_{ i\,=1}^{ n_\omega}}v(x_{\omega i})$ is non-empty for each $\omega$. Therefore,  $x\in   {\bigvee_{ i\,=1}^{ n_\omega}}v(x_{\omega i})$, for each $\omega$, and from which we can deduce that $v(x)\leqslant {\bigvee_{ i=1}^{ n_\omega}}v(x_{\omega i})$, that is, $v(x)\leqslant \cl(x)$. 	
\end{proof}

\begin{corollary}\label{spiir}
Every non-empty subbasic closed set of the lower space of proper elements is irreducible.
\end{corollary}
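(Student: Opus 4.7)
The goal is to reduce the statement to an application of Lemma \ref{irrc}. That lemma tells us that any subbasic closed set $v(x)$ is irreducible as soon as $x$ itself belongs to $v(x)$, so the whole matter boils down to verifying this membership condition in the special case $\Sigma_L = \mathrm{Prop}(L)$.

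The plan is as follows. Let $v(x)$ be a non-empty subbasic closed set of the lower space $\mathrm{Prop}(L)$; we must check that $x \in v(x)$, i.e., that $x$ is a proper element with $x \leqslant x$. The inequality is trivial, so only properness of $x$ needs attention. Pick any $l \in v(x)$, which is possible by non-emptiness. Then $l \in \mathrm{Prop}(L)$ and $x \leqslant l$. Since $l \neq 1$, the inequality $x \leqslant l$ forces $x \neq 1$; otherwise we would obtain $1 = x \leqslant l$, contradicting $l \neq 1$. Hence $x \in \mathrm{Prop}(L)$, and consequently $x \in v(x)$.

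With $x \in v(x)$ established, Lemma \ref{irrc} applies directly and yields that $v(x)$ is irreducible, which is what we wanted to prove. The argument is essentially bookkeeping: the only subtlety is to extract properness of the defining element $x$ from the mere non-emptiness of $v(x)$, and that is precisely where the assumption $\Sigma_L = \mathrm{Prop}(L)$ is used in an essential way (for other classes $\Sigma_L$, a non-empty $v(x)$ need not contain $x$ itself). I do not anticipate any genuine obstacle beyond this observation.
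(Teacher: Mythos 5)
Your proposal is correct and follows exactly the route the paper intends: the corollary is stated as an immediate consequence of Lemma \ref{irrc}, the only point to check being that non-emptiness of $v(x)$ in $\mathrm{Prop}(L)$ forces $x\neq 1$, hence $x\in v(x)$. Your verification of that membership condition is precisely the missing bookkeeping step, so there is nothing to add.
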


\begin{theorem}\label{ZariskiT1}
Let $L$ be a compactly generated multiplicative lattice with $1$ compact.
A lower space $\Sigma_L$ is a $T_1$ if and only if $m\in \Sigma_L$ for all $m\in \mathrm{Max}(L)$.
\end{theorem}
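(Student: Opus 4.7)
The approach rests on Lemma \ref{irrc}. Since $x\leqslant x$ trivially places $x\in v(x)$ for every $x\in\Sigma_L$, that lemma yields $\overline{\{x\}}=v(x)$ for every $x\in\Sigma_L$. Therefore $\Sigma_L$ is $T_1$ precisely when $v(x)=\{x\}$ for each $x\in\Sigma_L$, equivalently when no $l\in\Sigma_L$ distinct from $x$ lies above $x$. This reduces the topological claim to an order-theoretic one about how $\Sigma_L$ sits inside $L$. Separately, I would extract from the standing hypotheses the max-bounded property: given a proper $a\in L$, no ascending chain of proper elements above $a$ can join to $1$ (by compactness of $1$), so Zorn furnishes $m\in\mathrm{Max}(L)$ with $a\leqslant m$.

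For the backward direction, assume $\mathrm{Max}(L)\subseteq\Sigma_L$ and fix $x\in\Sigma_L$ together with an arbitrary $l\in v(x)$. By max-bounded, there is some $m\in\mathrm{Max}(L)\subseteq\Sigma_L$ with $l\leqslant m$, so that $x\leqslant l\leqslant m$ inside $\Sigma_L$. The plan is to collapse this chain and conclude $x=l$, using that maximal elements of $L$ form an antichain together with the characterization of closures via Lemma \ref{irrc}, which forces any two comparable members of $\Sigma_L$ whose upper member is maximal to coincide.

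For the forward direction, assume $\Sigma_L$ is $T_1$ and take $m_0\in\mathrm{Max}(L)$. I would examine the subbasic closed set $v(m_0)$: maximality of $m_0$ in $L$ gives $v(m_0)\subseteq\{m_0,1\}\cap\Sigma_L$. The aim is to show $v(m_0)\neq\emptyset$, whence necessarily $m_0\in\Sigma_L$. The cleanest way to rule out $v(m_0)=\emptyset$ is Lemma \ref{lfc}, applied to the proper element $m_0$. The main obstacle I anticipate is a latent circularity in this last step: Lemma \ref{lfc} presupposes the very inclusion $\mathrm{Max}(L)\subseteq\Sigma_L$ that one is trying to establish, so I would need to isolate the Zorn-based portion of its proof that uses only max-bounded, and combine it directly with the $T_1$ reduction from Lemma \ref{irrc} to close the argument.
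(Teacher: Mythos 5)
Your first step---using Lemma \ref{irrc} to get $\cl(\{x\})=v(x)$ for every $x\in\Sigma_L$ and thereby reducing $T_1$ to ``$v(x)=\{x\}$ for all $x\in\Sigma_L$'', i.e.\ to $\Sigma_L$ being an antichain---coincides with the paper's use of Corollary \ref{spiir}. But both directions of your sketch stall exactly at the points you left vague, and neither can be repaired, because the two implications you set out to prove are false as literally stated. In the backward direction, from $x\leqslant l\leqslant m$ with $m\in\mathrm{Max}(L)\subseteq\Sigma_L$ there is no way to ``collapse the chain'': maximality of $m$ constrains what lies \emph{above} $m$ and says nothing about the comparable pair $x\leqslant l$ beneath it, and the assertion that two comparable members of $\Sigma_L$ under a common maximal element must coincide is simply wrong. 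Concretely, $\Sigma_L=\mathrm{Spec}(L)$ contains all maximal elements under the standing hypotheses, yet for $L$ the ideal lattice of $\mathds{Z}$ one has $0<(p)$ in $\mathrm{Spec}(L)$, so $(p)\in v(0)=\cl(\{0\})$ and the space is not $T_1$. In the forward direction, the circularity you flag in invoking Lemma \ref{lfc} is not a presentational nuisance but fatal: $T_1$ does not imply $\mathrm{Max}(L)\subseteq\Sigma_L$. The class of minimal prime elements (one of the paper's distinguished classes) is an antichain, hence $T_1$ by your own reduction, but for $L$ the ideals of $\mathds{Z}$ it equals $\{0\}$ and contains no maximal element; moreover, for any class of proper elements $v(m_0)\neq\emptyset$ is \emph{equivalent} to $m_0\in\Sigma_L$, so no argument from $T_1$ alone can produce it.

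The deeper issue is that the paper's own proof establishes an equivalence oriented the other way, and your proposal takes the printed statement at face value. The paper's forward argument takes $x\in\Sigma_L$, a maximal $m\geqslant x$, and uses $m\in v(x)=\cl(\{x\})=\{x\}$ to get $x=m$ (note this tacitly requires $m\in\Sigma_L$), so it proves that $T_1$ forces $\Sigma_L\subseteq\mathrm{Max}(L)$; its converse only shows that each $m\in\mathrm{Max}(L)$ lying in $\Sigma_L$ is a closed point, since $v(m)=\{m\}$, which yields $T_1$ precisely when every point of $\Sigma_L$ is maximal. Thus the statement the proof actually supports is: for a class of proper elements containing all maximal elements, $\Sigma_L$ is $T_1$ if and only if $\Sigma_L=\mathrm{Max}(L)$. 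After your (correct) closure reduction, the condition to aim for is therefore $\Sigma_L\subseteq\mathrm{Max}(L)$ (equivalently, $\Sigma_L$ is an antichain), not $\mathrm{Max}(L)\subseteq\Sigma_L$; that misdirection, rather than any flaw in your use of Lemma \ref{irrc}, is why the missing steps in your plan cannot be filled in.
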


\begin{proof}
Suppose that $x\in \Sigma_L$. Then $x\in v(x)$, and so, by Corollary \ref{spiir}, $\cl(x)=v(x)$. Let $m$ be a maximal element of $L$ such that $x\leqslant m$. Then   \[m\in 	v(x)=\cl(x) = \{x\},\] where the second equality follows from the fact that $\Sigma_L$ is a $T_{ 1}$-space. Therefore $m=x$, showing that $\Sigma_L\subseteq \mathrm{Max}(L)$.  
Conversely,  $v(m)=\{m\}$ holds for every $m\in \mathrm{Max}(L)$, so that $m\in v(m)$. Hence, $\cl(m)=\{m\}$, by Corollary \ref{spiir}, and this proves the claim.
\end{proof}

Next we characterize sobar lower spaces of multiplicative lattices.

\begin{theorem}
\label{sob}  
An lower space $\Sigma_L$ is sober if and only if
\begin{equation}\label{soc}
\bigwedge_{\substack{x\leqslant y\\ y\in \Sigma_L}}  v(y)\subseteq v(x),
\end{equation}
for every non-empty irreducible subbasic closed subset of $v(x)$ of $\Sigma_L$.
\end{theorem}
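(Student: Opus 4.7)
The plan rests on two observations that together reduce the theorem to a direct generic-point argument. First, by Proposition~\ref{t0a} every lower space is automatically $T_0$, so sobriety of $\Sigma_L$ is equivalent to the existence of a generic point for every non-empty irreducible closed set. Second, the proof of Lemma~\ref{irrc} in fact shows that $\cl(\{y_0\})=v(y_0)$ for every $y_0\in\Sigma_L$ (since $y_0\in v(y_0)$ trivially), so having a generic point for an irreducible closed set $v(a)$ is precisely having some $y_0\in\Sigma_L$ with $v(y_0)=v(a)$.

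The key preliminary step I would carry out is the subbasic reduction: every non-empty irreducible closed set in $\Sigma_L$ is automatically of the form $v(a)$ for some $a\in L$. Writing such a set as $C=\bigcap_\alpha\bigcup_{i\in F_\alpha}v(x_{\alpha,i})$ and applying irreducibility of $C$ to each finite decomposition $C=\bigcup_{i}\bigl(C\cap v(x_{\alpha,i})\bigr)$, one extracts an index $i_\alpha$ with $C\subseteq v(x_{\alpha,i_\alpha})$, and then $C=\bigcap_\alpha v(x_{\alpha,i_\alpha})=v\bigl(\bigvee_\alpha x_{\alpha,i_\alpha}\bigr)$. This bridge is what allows the hypothesis (\ref{soc}), formulated only for subbasic closed sets, to speak to arbitrary irreducible closed sets.

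For the forward direction, I would assume $\Sigma_L$ is sober and let $v(x)$ be a non-empty irreducible subbasic closed subset. Sobriety supplies a generic point $y_0\in\Sigma_L$ with $\cl(\{y_0\})=v(x)$, and Lemma~\ref{irrc} identifies $\cl(\{y_0\})=v(y_0)$, giving $v(x)=v(y_0)$. In particular $x\leqslant y_0$, so $y_0$ appears as one of the indices of the intersection in (\ref{soc}), and the desired inclusion $\bigwedge_{x\leqslant y,\,y\in\Sigma_L}v(y)\subseteq v(y_0)=v(x)$ follows immediately.

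For the converse, I would assume (\ref{soc}) for every non-empty irreducible subbasic closed set and take an arbitrary non-empty irreducible closed set $C$. The subbasic reduction writes $C=v(a)$. The candidate generic point is the meet $y_0:=\bigwedge_{y\in v(a)}y$ computed in $L$, which automatically satisfies $a\leqslant y_0$ and $y_0\leqslant y$ for every $y\in v(a)$, so that $v(y_0)=v(a)$ once membership $y_0\in\Sigma_L$ is established. The hypothesis~(\ref{soc}) applied to $v(a)$ is exactly what is used to certify this membership, and then Lemma~\ref{irrc} delivers $\cl(\{y_0\})=v(y_0)=v(a)=C$, producing the required generic point.

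The main obstacle is the converse direction, and specifically the step of reading the inclusion in (\ref{soc}) as the guarantee that the meet $\bigwedge v(a)$ actually belongs to the distinguished class $\Sigma_L$; without that, the natural minimum candidate for a generic point can escape $\Sigma_L$. The subbasic reduction is the crucial preliminary that makes the hypothesis applicable to every irreducible closed set rather than only the subbasic ones.
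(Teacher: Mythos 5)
Your proposal is correct and follows essentially the same route as the paper: the forward direction extracts a generic point $y_0$ and uses Lemma \ref{irrc} to identify $v(x)=\cl(\{y_0\})=v(y_0)$, while the converse performs the same subbasic reduction of an arbitrary irreducible closed set to a single $v(t)$ and takes the meet of $v(t)$ as the generic point, with uniqueness from Proposition \ref{t0a}. Your closing remark is also on target: the paper's own proof reads condition (\ref{soc}) as the assertion that $\bigwedge v(x)$ lies in $\Sigma_L$ (the literal set-theoretic inclusion is vacuous for non-empty $v(x)$), which is precisely the reading your converse needs.
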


\begin{proof}
If $\Sigma_L$ is a sober space and $v(x)$ is a non-empty irreducible subbasic closed subset of $\Sigma_L$, then  
$v(x)=\cl(\{y\})=v(y)$
for some $y\in \Sigma_L$, and we have \[\displaystyle y=\bigwedge_{\substack{x\leqslant y\\ y\in \Sigma_L}}  v(y)\in \Sigma_L.\] 
Conversely, suppose the condition (\ref{soc}) holds for every non-empty irreducible subset of $\Sigma_L$. Let $C$ be an irreducible closed subset of $\Sigma_L$. Then \[C=\bigwedge_{i\in \Omega}\left (  \bigvee_{j=1}^m v(y_{ji}) \right),\] for some elements $y_{ji}$ of $L$. Since $C$ is irreducible, for every $i\in \Omega$ there exists an element $y_{ji}$ of $L$ such that $C\subseteq v(y_{ji})\subseteq \bigvee_{j=1}^mv(y_{ji})$ and thus, if $t= \bigvee_{i\in \Omega}y_{ji}$, then we have  \[C=\bigwedge_{i\in \Omega}v(y_{ji})=v(t)=v\left(\bigwedge_{\substack{t\leqslant l \\ l\in \Sigma_L}}  l \right)=\cl\left({\bigwedge_{\substack{t\leqslant l\\ l\in \Sigma_L}}  l}\right).\]
The uniqueness part of the claim follows from Proposition \ref{t0a}.
\end{proof}

\begin{corollary}\label{qss}
The lower spaces $\mathrm{Prop}(L)$, $\mathrm{Spec}(L)$, and $\mathrm{Irr}^+(L)$ are sober.
\end{corollary}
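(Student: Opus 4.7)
The plan is to invoke Theorem~\ref{sob} for each of the three classes. The task reduces in each case to verifying that whenever $v(x)$ is a non-empty irreducible subbasic closed subset of $\Sigma_L$, the meet $\bigwedge\{l\in\Sigma_L\mid x\leqslant l\}$ again belongs to $\Sigma_L$, so that it plays the role of the generic point of $v(x)$.

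For $\Sigma_L=\mathrm{Prop}(L)$, Corollary~\ref{spiir} removes the irreducibility hypothesis entirely: every non-empty subbasic closed set is already irreducible. Moreover, $v(x)\neq\emptyset$ forces $x$ itself to be proper, and the meet of a non-empty family of proper elements is bounded above by any member of the family and therefore remains strictly below $1$. Hence the meet lies in $\mathrm{Prop}(L)$.

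For $\Sigma_L=\mathrm{Spec}(L)$, I would let $v(x)$ be a non-empty irreducible subbasic closed set and show that $\sqrt{x}=\bigwedge\{p\in\mathrm{Spec}(L)\mid x\leqslant p\}$ is prime. Suppose $a\cdot b\leqslant\sqrt{x}$; then $a\cdot b\leqslant p$ for every $p\in v(x)$, and primality of $p$ yields $a\leqslant p$ or $b\leqslant p$ for each such $p$. This gives the closed decomposition
\[
v(x)=\bigl(v(x)\cap v(a)\bigr)\cup\bigl(v(x)\cap v(b)\bigr),
\]
and irreducibility of $v(x)$ forces one of the pieces to equal $v(x)$, say $v(x)\subseteq v(a)$, whence $a\leqslant\bigwedge v(x)=\sqrt{x}$. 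The case $\Sigma_L=\mathrm{Irr}^+(L)$ is the exact analogue with $a\wedge b$ in place of $a\cdot b$: strong irreducibility of each element of $v(x)$ produces the same union decomposition, and the same appeal to irreducibility concludes that $\bigwedge\{s\in\mathrm{Irr}^+(L)\mid x\leqslant s\}$ is itself strongly irreducible.

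The main obstacle is precisely the passage from the pointwise dichotomy ``each $l\in v(x)$ lies above $a$ or above $b$'' to the global assertion that a single one of the two alternatives holds throughout $v(x)$; this is where the topological irreducibility of $v(x)$ is essential, and it is the only non-formal ingredient of the argument. Everything else is routine: monotonicity of $v(\cdot)$, the identification $\bigwedge v(x)=\bigwedge\{l\in\Sigma_L\mid x\leqslant l\}$, and the observation that this meet inherits properness, primality, or strong irreducibility as appropriate.
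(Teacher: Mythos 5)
Your proposal is correct and follows the route the paper intends: the corollary is stated as an immediate consequence of Theorem~\ref{sob}, and your work consists precisely in verifying the generic-point condition left implicit there, namely that $\bigwedge\{l\in\Sigma_L\mid x\leqslant l\}$ is again proper, prime, or strongly irreducible for a non-empty irreducible $v(x)$. Your use of the topological irreducibility of $v(x)$ to globalize the pointwise dichotomy in the $\mathrm{Spec}(L)$ and $\mathrm{Irr}^+(L)$ cases is exactly the standard argument the paper relies on, so there is nothing to add.
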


Our subsequent objective is to furnish an example of a spectral lower space. Recall from \cite{Hoc69} that a topological space is called \emph{spectral} if it is compact, sober,  admitting a basis of compact open subspaces that is closed under finite intersections.

\begin{theorem}\label{tsqs}
Let $L$ be a compactly generated multiplicative lattice with $1$ compact, and satisfies the ``max-bounded'' condition. Then
the lower space $\mathrm{Prop}(L)$ is spectral;
\end{theorem}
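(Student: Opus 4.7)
The plan is to verify each of the four conditions in Hochster's characterization of spectral spaces: $T_0$, compact, sober, and the existence of a basis of compact open subsets closed under finite intersections. The first three follow directly from results already established: $T_0$ from Proposition \ref{t0a}, compactness from Proposition \ref{csb} (since $\mathrm{Prop}(L)$ contains all maximal elements of $L$), and sobriety from Corollary \ref{qss}.

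For the last condition, write $D(x) := \mathrm{Prop}(L) \setminus v(x)$ and consider the family
\[
\mathcal{B} := \left\{\bigcap_{i=1}^n D(c_i) : n \geqslant 1,\ c_1,\dots,c_n \text{ compact in } L\right\}.
\]
To see $\mathcal{B}$ is a basis for the lower topology on $\mathrm{Prop}(L)$, I would use compact generation to write $x = \bigvee\{c : c \leqslant x,\ c \text{ compact}\}$ for every $x \in L$, which yields $D(x) = \bigcup\{D(c) : c \leqslant x,\ c \text{ compact}\}$; since the sets $D(x)$ form a subbasis for the topology, every open subset of $\mathrm{Prop}(L)$ is a union of elements of $\mathcal{B}$. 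Closure of $\mathcal{B}$ under finite intersection is immediate.

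The main task is to show each $U = \bigcap_{i=1}^n D(c_i) \in \mathcal{B}$ is compact. By Alexander's subbasis theorem applied to the subspace topology, it suffices to refine an arbitrary cover of $U$ by subbasic opens $\{D(y_\alpha)\}_{\alpha \in \Lambda}$ to a finite subcover. Passing to complements in $\mathrm{Prop}(L)$, such a cover is equivalent to the statement that every proper $p$ with $y := \bigvee_{\alpha \in \Lambda} y_\alpha \leqslant p$ satisfies $c_i \leqslant p$ for some $i$. I would then split into two cases. If $y = 1$, compactness of $1$ produces a finite $F \subseteq \Lambda$ with $\bigvee_{\alpha \in F} y_\alpha = 1$, so $\bigcup_{\alpha \in F} D(y_\alpha) = \mathrm{Prop}(L) \supseteq U$. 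If instead $y$ is proper, then $y \in \mathrm{Prop}(L)$ and $y \leqslant y$, so by the hypothesis some index $i_0$ satisfies $c_{i_0} \leqslant y = \bigvee_\alpha y_\alpha$; compactness of $c_{i_0}$ then furnishes a finite $F \subseteq \Lambda$ with $c_{i_0} \leqslant \bigvee_{\alpha \in F} y_\alpha$, whence $U \subseteq D(c_{i_0}) \subseteq \bigcup_{\alpha \in F} D(y_\alpha)$.

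The hard part is this second case; its resolution hinges on the observation that $y$ itself lies in $\mathrm{Prop}(L)$, so the covering hypothesis can be evaluated at $p = y$. This is precisely the feature that distinguishes $\mathrm{Prop}(L)$ from a general lower space $\Sigma_L$: for arbitrary $\Sigma_L$ the element $y$ need not belong to $\Sigma_L$, and this line of argument would not be available.
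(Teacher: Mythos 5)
Your proof is correct, but it takes a genuinely different route from the paper. The paper does not verify Hochster's conditions directly: it invokes Priestley's result that an algebraic lattice under the lower topology is spectral, together with Lemma \ref{cso} (a compact, sober, open subspace of a spectral space is spectral), so that after quoting compactness (Corollary \ref{ciqc}) and sobriety (Corollary \ref{qss}) the only remaining task is to observe that $\mathrm{Prop}(L)$ is open in the lower space of all elements of $L$ (its complement $\{1\}=v(1)$ being closed). You instead check the definition head-on, and the substantive new content in your argument is the explicit basis $\mathcal{B}=\{\bigcap_{i=1}^{n}D(c_i)\mid c_i\ \text{compact}\}$: compact generation gives $D(x)=\bigcup\{D(c)\mid c\leqslant x,\ c\ \text{compact}\}$, hence $\mathcal{B}$ is a basis closed under finite intersections, and your Alexander-subbasis argument for compactness of each $\bigcap_{i=1}^{n}D(c_i)$ is sound --- the case split on whether $y=\bigvee_\alpha y_\alpha$ equals $1$ is handled by compactness of $1$, and when $y$ is proper you correctly exploit that $y$ itself is a point of $\mathrm{Prop}(L)$ at which the covering hypothesis can be evaluated, yielding $c_{i_0}\leqslant y$ and then a finite subcover by compactness of $c_{i_0}$. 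What each approach buys: the paper's is shorter but leans on two external results (Priestley's theorem and Lemma \ref{cso} from \cite{Gos23}), whereas yours is self-contained modulo Alexander's subbasis theorem, produces a concrete compact open basis, and makes transparent exactly where the special feature of $\mathrm{Prop}(L)$ (that the join $y$, if proper, is itself a point of the space) enters --- the same feature that, in the paper's treatment, is hidden inside sobriety and the openness of $\mathrm{Prop}(L)$ in $L$. A minor remark: your appeal to Proposition \ref{csb} for compactness of the whole space is fine, but it is also subsumed by your own argument, since $\mathrm{Prop}(L)=D(1)\in\mathcal{B}$ because $1$ is compact.
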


The assertion is derived through the application of the following lemma, the proof of which  can be found in \cite[Lemma 2.2]{Gos23}. 	
	
\begin{lemma}\label{cso}
A compact, sober, open subspace of a spectral space is spectral. 
\end{lemma}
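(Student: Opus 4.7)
Let $X$ be a spectral space and $U\subseteq X$ a compact, sober, open subspace. The plan is to transport a spectral basis from $X$ down to $U$. Fix a basis $\mathcal{B}$ of compact open subsets of $X$ closed under finite intersection, which exists by the spectral hypothesis on $X$, and consider the candidate basis $\mathcal{B}|_{U}:=\{B\cap U:B\in\mathcal{B}\}$ for the subspace topology on $U$.

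It is immediate that $\mathcal{B}|_{U}$ is a basis for $U$, and closure under finite intersection follows from $(B\cap U)\cap (B'\cap U)=(B\cap B')\cap U$ together with $B\cap B'\in\mathcal{B}$. The key step is verifying that each $B\cap U$ is compact. For this I would combine the compactness of $U$ with the openness of $U$ in $X$: since $\mathcal{B}$ is a basis of $X$ and $U$ is open in $X$, the set $U$ is a union of members of $\mathcal{B}$ contained in $U$, and the compactness of $U$ reduces this to a finite union $U=B_{1}\cup\cdots\cup B_{n}$ with each $B_{i}\in\mathcal{B}$. Then
\[
B\cap U=\bigcup_{i=1}^{n}(B\cap B_{i}),
\]
where each $B\cap B_{i}$ lies in $\mathcal{B}$ by closure under finite intersection, and is therefore compact. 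A finite union of compact sets being compact yields compactness of $B\cap U$.

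With $\mathcal{B}|_{U}$ established as a basis of compact open subsets of $U$ closed under finite intersection, and with $U$ being compact and sober by hypothesis, the Hochster characterization of spectral spaces recalled just before Theorem \ref{tsqs} yields that $U$ is spectral. The main technical point is the compactness verification for $B\cap U$: it hinges on using the compactness of $U$ to replace $U$ by a finite union of basis elements, after which closure of $\mathcal{B}$ under finite intersection keeps each piece inside $\mathcal{B}$ (and hence compact). Once this is secured, the remaining spectral conditions on $U$ are inherited directly from the hypotheses, so no further effort is needed.
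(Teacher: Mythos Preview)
Your argument is correct. The paper does not actually supply a proof of this lemma; it simply cites \cite[Lemma 2.2]{Gos23}. Your self-contained verification via the restricted basis $\mathcal{B}|_U$ is exactly the standard route, and the crucial step---writing the compact open $U$ as a finite union $B_1\cup\cdots\cup B_n$ of basic compact opens so that $B\cap U=\bigcup_i(B\cap B_i)$ lands back in $\mathcal{B}$---is handled cleanly. Since the paper's definition of spectral (compact, sober, and possessing a basis of compact opens closed under finite intersection) is precisely what you check, nothing is missing.
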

	
The advantage of the aforementioned lemma lies in its capacity to circumvent the need to establish the existence of a compact open basis and the closedness of open compact sets under finite intersections.

\begin{proof}	
The lower space  of all elements is spectral follows from the fact that an algebraic lattice endowed with lower topology is a spectral space (see \cite[Theorem 4.2]{Pri94}).
Since the lower space $\mathrm{Prop}(L)$ is a subspace of the lower space $L$, according to Lemma \ref{cso}, we need to verify that  lower space $\mathrm{Prop}(L)$ is compact, sober, and
is an open subspace of the lower space $L$. Now compactness and sobriety respectively follow from Corollary \ref{ciqc} and Corollary \ref{qss}. 
Therefore, what remains is  to show that  
the lower space  is open in all elements.
By Lemma \ref{irrc}, \[L=\bigvee_{l\in L}v(l)=\cl(L),\] and therefore $L \backslash \mathrm{Prop}(L) $ is closed, and that implies $\mathrm{Prop}(L)$ is open. 
\end{proof} 

Like Alexander subbasis theorem, there is no characterization of connectedness in terms of subbasic closed sets. Nevertheless, we wish to present a disconnectivity result of lower spaces of a multiplicative lattice that bears resemblance to the fact that if spectrum of prime ideals (of a commutative ring with identity) endowed with Zariski topology is disconnected, then the ring has a proper idempotent element (see \cite[\textsection 4.3, Corollary 2]{Bou72}). 

We say a closed subbasis $\mathfrak{S}$ of a topological space $X$ \emph{strongly disconnects} $X$  if there exist two non-empty subsets $A,$ $B$ of $\mathfrak{S}$ such that \[X=A\vee  B\quad  \text{and}\quad  A\wedge  B=\emptyset.\]
It is clear that if some closed subbasis strongly disconnects  a  topological space, then the space is disconnected. Also, if a space is disconnected, then some closed subbasis (for instance the collection of all its closed subspaces) strongly disconnects it.

\begin{proposition}\label{pr1}  
Suppose that $L$ is a multiplicative lattice with  $\mathrm{Jac}(L)=0$. Let $\Sigma_L$ be a class of elements of $L$ containing all maximal elements of $L.$ If the subbasis $\mathfrak{S}$ of the lower space $\Sigma_L$ strongly  disconnects  $\Sigma_L$, then $L$ has a non-trivial idempotent element.
\end{proposition}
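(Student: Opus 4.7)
\emph{Strategy.} The target is a non-trivial idempotent $e \in L$. The standard way to produce one is to find $a, b \in L$ with $a \cdot b = 0$ and $a \vee b = 1$; once such a pair is in hand, axioms~(3)--(4) of a multiplicative lattice yield
$$
a \;=\; a \cdot 1 \;=\; a \cdot (a \vee b) \;=\; a^{2} \vee (a \cdot b) \;=\; a^{2},
$$
so $a$ is idempotent, and non-triviality will be read off from the non-emptiness of the subbasic sets witnessing the disconnection.

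\emph{Construction.} Write $A = \{v(a_{i})\}_{i \in I}$ and $B = \{v(b_{j})\}_{j \in J}$. Strong disconnection unpacks as the two assertions
(i) for every $s \in \Sigma_{L}$ some $a_{i} \leqslant s$ or some $b_{j} \leqslant s$, and
(ii) $v(a_{i}) \cap v(b_{j}) = v(a_{i} \vee b_{j}) = \emptyset$ for every pair $(i, j)$.
The cleanest case, which I treat in detail, is the singleton one $A = \{v(a)\}$, $B = \{v(b)\}$. Since $\mathrm{Max}(L) \subseteq \Sigma_{L}$, applying (i) to each maximal element $m$ gives $a \leqslant m$ or $b \leqslant m$, hence $a \wedge b \leqslant m$, and therefore
$$
a \wedge b \;\leqslant\; \bigwedge \mathrm{Max}(L) \;=\; \mathrm{Jac}(L) \;=\; 0;
$$
Lemma~\ref{bip}(\ref{mul}) then yields $a \cdot b \leqslant a \wedge b = 0$. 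Condition (ii) says that $a \vee b$ lies under no element of $\Sigma_{L}$, and in particular under no maximal element; the standard max-bounded reasoning used throughout this section (available, e.g., when $L$ is compactly generated with $1$ compact) then forces $a \vee b = 1$, and the displayed computation above produces the idempotent $a = a^{2}$. Non-triviality follows: $v(a)$ being non-empty exhibits a proper $s \in \Sigma_{L}$ with $a \leqslant s$, so $a \neq 1$; while $a = 0$ would force $v(a) = \Sigma_{L}$, hence $v(b) = \emptyset$, contradicting the non-emptiness of $B$.

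\emph{Main obstacle.} The delicate step is the inference ``$a \vee b$ bounded by no maximal element $\Rightarrow a \vee b = 1$'', which rests on max-boundedness; this is implicit in the framework of the paper but not stated explicitly in the proposition. The second subtlety is the general (non-singleton) case: the natural constructions split into two. Taking $\alpha := \bigwedge_{i} a_{i}$, $\beta := \bigwedge_{j} b_{j}$ runs the first half to give $\alpha \cdot \beta = 0$; while taking $\alpha' := \bigvee_{i} a_{i}$, $\beta' := \bigvee_{j} b_{j}$ gives $v(\alpha' \vee \beta') = \bigcap_{i,j} v(a_{i} \vee b_{j}) = \emptyset$, hence $\alpha' \vee \beta' = 1$ (again under max-bounded). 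Reconciling these meet- and join-based constructions into a single pair satisfying both identities simultaneously is the main combinatorial difficulty; the natural way out is to first reduce the cover to a finite, and ultimately singleton, disconnection by a compactness or subbasis argument, and then apply the singleton case above.
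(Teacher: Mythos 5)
Your singleton-case argument is essentially the paper's own proof: the paper reads ``strongly disconnects'' as providing two non-empty subbasic sets $v(x)$ and $v(y)$ that are disjoint and cover $\Sigma_L$, deduces $x\vee y=1$ from $v(x\vee y)=\emptyset$ and $x\cdot y\leqslant \mathrm{Jac}(L)=0$ from the cover, and concludes $x=x\cdot(x\vee y)=x^2\vee(x\cdot y)=x^2$ exactly as you do, so the family-to-singleton reduction you flag as the main difficulty is neither needed nor performed there. Your caveat about the step $v(x\vee y)=\emptyset\Rightarrow x\vee y=1$ resting on max-boundedness is accurate, but this is an implicit assumption shared by the paper's proof rather than a defect of yours.
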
 

\begin{proof}
Let $x$ and $y$ be elements in $L$ such that
1. $
v(x)\wedge  v(y) =\emptyset,$
2. $v(x)\vee  v(y) =\Sigma_L,$ and  
3. $v(x)\ne\emptyset, $ $ v(y) \neq \emptyset. $
Since $v(x)\wedge  v(y)=v(x \vee y)$, we therefore have $v(x\vee y)=\emptyset$ and hence $x\vee y =1$, because $\Sigma_L$ contains all maximal elements in $L$. On the other hand, 
\[
\Sigma_L=v(x)\vee  v(y)\subseteq v(x\cdot y),
\] which then implies that $x\cdot y$ is contained in every maximal element in $L$, and is therefore $x\cdot y =0$  as $L$ has trivial $\mathrm{Jac}(L)$. Note that the condition 3. implies that neither $x$ nor $y$ is the top element $1$ of the multiplicative lattice $L$.  Since  $x\cdot y=0$, we therefore have
\[
x=x\cdot 1= x\cdot(x\vee y)=x^{ 2}\vee (x\cdot y) =x^2\vee 0=x^2,
\]
showing that $x$ is a non-trivial  idempotent element in $L$. 
\end{proof}

The next theorem gives a sufficient condition for a lower space to be connected.

\begin{theorem}\label{conn}
If a class $\Sigma_L$ of elements of a multiplicative lattice $L$ contains the element $0$, then the lower space $\Sigma_L$ is connected. 
\end{theorem}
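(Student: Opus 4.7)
The plan is to establish the stronger statement that $0$ is a generic point of $\Sigma_L$, that is, the only closed subset of $\Sigma_L$ containing $0$ is $\Sigma_L$ itself. Connectedness then follows immediately: any partition $\Sigma_L = C_1\cup C_2$ into disjoint non-empty closed sets must place $0$ in one of the pieces, forcing that piece to equal the whole space and the other to be empty, a contradiction.

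First I would pin down when $0$ lies in a subbasic closed set $v(x) = \{l\in\Sigma_L \mid x\leqslant l\}$. Since $0\leqslant z$ for every $z\in L$, one has $0\in v(x)$ if and only if $x\leqslant 0$, i.e.\ $x=0$; and in that case $v(0)=\Sigma_L$. Hence $0$ belongs to a subbasic closed set precisely when that set is the whole space.

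Next, an arbitrary closed set of the lower topology has the form
\[
C = \bigwedge_{\alpha\in A}\bigl(\,\bigvee_{j=1}^{n_\alpha} v(x_{\alpha j})\,\bigr)
\]
for suitable $x_{\alpha j}\in L$. If $0\in C$, then for each $\alpha$ the finite union $\bigvee_{j=1}^{n_\alpha} v(x_{\alpha j})$ contains $0$; finiteness lets us extract some index $j$ with $0\in v(x_{\alpha j})$, and the previous step then forces $v(x_{\alpha j})=\Sigma_L$ and hence $\bigvee_{j=1}^{n_\alpha}v(x_{\alpha j})=\Sigma_L$ for every $\alpha$. Taking the meet over $\alpha$ gives $C=\Sigma_L$, as desired, and the disconnection-to-contradiction step above finishes the proof.

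The argument is essentially bookkeeping once the generic-point structure is noticed; the only place demanding a small amount of care is the passage from $0\in\bigvee_{j=1}^{n_\alpha} v(x_{\alpha j})$ to $0\in v(x_{\alpha j})$ for some particular $j$, which rests squarely on the unions being \emph{finite} in the canonical description of closed sets and would genuinely fail if one allowed arbitrary joins of subbasic closed sets.
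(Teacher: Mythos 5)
Your proof is correct and takes essentially the same route as the paper: you show that $0$ is a generic point of $\Sigma_L=v(0)$ (so the space is irreducible, hence connected), which is exactly the content of the paper's Lemma~\ref{irrc} specialized to $x=0$. The only difference is that you inline the generic-point argument for this special case instead of citing the lemma.
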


\begin{proof}
Since  $\Sigma_L=v(0)$ and irreducibility implies connectedness, the claim follows from Corollary \ref{spiir}.
\end{proof}

\begin{corollary}
The lower space $\mathrm{Prop}(L)$ is connected. 
\end{corollary}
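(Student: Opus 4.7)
The plan is to specialize Theorem \ref{conn} to the case $\Sigma_L = \mathrm{Prop}(L)$. The hypothesis of that theorem requires only that the class in question contain the bottom element $0$, so the sole thing to verify is that $0$ belongs to $\mathrm{Prop}(L)$, i.e., that $0 \neq 1$ in $L$. This is the standing non-triviality assumption on the multiplicative lattice: if $0 = 1$, then $L$ collapses to the one-point lattice and $\mathrm{Prop}(L) = \emptyset$, for which the corollary is vacuous anyway.

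Once $0 \in \mathrm{Prop}(L)$ has been noted, an appeal to Theorem \ref{conn} delivers the connectedness of $\mathrm{Prop}(L)$ at once. I expect no real obstacle: the entire content of the corollary is the observation that $\mathrm{Prop}(L) = v(0)$ together with invocation of the preceding theorem. The conceptual reason the argument works is already visible in the proof of Theorem \ref{conn}: by Corollary \ref{spiir}, the non-empty subbasic closed set $v(0) = \mathrm{Prop}(L)$ is irreducible, and irreducibility trivially implies connectedness.
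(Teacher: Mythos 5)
Your proposal is correct and follows exactly the route the paper intends: the corollary is an immediate application of Theorem \ref{conn}, since $0\in\mathrm{Prop}(L)$ (in the non-degenerate case $0\neq 1$) and $\mathrm{Prop}(L)=v(0)$ is irreducible, hence connected. Your remark on the degenerate one-point lattice is a harmless extra precision not made explicit in the paper.
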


We now discuss about continuous maps between lower spaces of multiplicative lattices. Recall that a map $\phi\colon L \to L'$ from $L$ to $L'$ is called a \emph{multiplicative lattice homomorphism} if 
\begin{enumerate}
\item  $x\leqslant  x'$ implies that $\phi(x)\leqslant  \phi(x');$	
	
\item  $\phi(x\vee x')=\phi(x)\vee \phi(x');$
	
\item  $\phi(x\wedge x')=\phi(x)\wedge \phi(x');$
	
\item  $\phi(x\cdot  x')=\phi(x)\cdot  \phi(x'),$

\item $\phi(1)=1$,
\end{enumerate}
for all $x,$ $x'\in L$. 
Suppose that  $\phi\colon L \to L'$ is a multiplicative lattice homomorphism.
If $y$ is an element in $L'$, then the \emph{contraction of} $y$ is defined by $\phi\inv (y).$ In particular, the \emph{kernel of} $\phi$ is defined as $\mathrm{Ker}\phi:=\phi\inv(0)$.

Observe that although inverse image of an element from a given class $\Sigma_{L'}$ under a multiplicative lattice homomorphism  may not belong to the similar class $\Sigma_L$. 
We say a class $\Sigma_L$ of elements satisfies the \emph{contraction} property if for any multiplicative lattice homomorphism $\phi\colon L\to L',$ the inverse image  $\phi\inv(y)$ is in $\Sigma_L$, whenever $y$ is in $\Sigma_{L'}.$

\begin{proposition}\label{conmap}
Let $L$ be a multiplicative lattice. Suppose that $\Sigma_L$ is a class of elements satisfying the contraction property. Let $\phi\colon L\to L'$ be a multiplicative lattice homomorphism  and $y\in\Sigma_{L'}.$ 
\begin{enumerate}
		
\item\label{contxr} The induced map $\phi_*\colon  \Sigma_{L'}\to \Sigma_L$ defined by  $\phi_*(y)=\phi\inv(y)$ is    continuous.
		
\item If $\phi$ is  surjective, then the multiplicative lattice space $\Sigma_{L'}$ is homeomorphic to the closed subspace \[\mathrm{Ker}\phi^{\uparrow}:=\{ x\in \Sigma_L\mid k\leqslant x,\forall k\in \mathrm{Ker}\phi \}\] of the multiplicative lattice space $\Sigma_L.$	
\item\label{den} The subset  $\phi_*(\Sigma_{L'})$ is dense in $\Sigma_L$ if and only if  $k\leqslant  \bigwedge_{x\in \Sigma_L}x,$ for every $k\in\mathrm{Ker}\phi.$ 
\end{enumerate}
\end{proposition}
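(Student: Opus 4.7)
My plan is to handle the three parts of the proposition in sequence, with the unifying tool being the adjoint-style identity $\phi(x) \leqslant y \Leftrightarrow x \leqslant \phi\inv(y)$ that is built into the definition of contraction.

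For part (1), the approach is to check continuity on the closed subbasis. A direct computation,
\[
\phi_*\inv(v(x)) = \{y \in \Sigma_{L'} : x \leqslant \phi\inv(y)\} = \{y \in \Sigma_{L'} : \phi(x) \leqslant y\} = v(\phi(x)),
\]
shows the preimage of each subbasic closed set is subbasic closed, so $\phi_*$ is continuous. This step is routine.

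For part (2), I would proceed in four moves. First, the image of $\phi_*$ sits inside $\mathrm{Ker}\phi^{\uparrow}$: for $y \in \Sigma_{L'}$ and $k \in \mathrm{Ker}\phi$, $\phi(k) = 0 \leqslant y$ forces $k \leqslant \phi\inv(y)$. Second, surjectivity of $\phi$ supplies $\phi \circ \phi\inv = \mathrm{id}_{L'}$, which yields injectivity of $\phi_*$. Third, for surjectivity of $\phi_*$ onto $\mathrm{Ker}\phi^{\uparrow}$ I would establish the fixed-point identity $\phi\inv(\phi(x)) = x$ whenever $x \geqslant \mathrm{Ker}\phi$, the multiplicative-lattice analog of the classical correspondence theorem for surjective maps. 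Fourth, continuity of $\phi_*$ is already in hand from (1), and I would verify the inverse is continuous by showing $\phi_*$ is a closed map through the explicit formula $\phi_*(v(y_0) \cap \Sigma_{L'}) = v(\phi\inv(y_0)) \cap \mathrm{Ker}\phi^{\uparrow}$. That $\mathrm{Ker}\phi^{\uparrow} = \bigcap_{k \in \mathrm{Ker}\phi} v(k)$ is closed in $\Sigma_L$ is immediate by construction.

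For part (3), the forward implication is a short consequence of the image computation used in (2): for each $k \in \mathrm{Ker}\phi$ one has $\phi_*(\Sigma_{L'}) \subseteq v(k)$, and density combined with closedness of $v(k)$ forces $v(k) = \Sigma_L$, hence $k \leqslant \bigwedge_{x \in \Sigma_L} x$. The converse is the step I expect to be the main obstacle. The plan is first to record the auxiliary characterization $\overline{S} = v(\bigwedge S) \cap \Sigma_L$ for any $S \subseteq \Sigma_L$, which holds because meet-irreducibility of each element of $\Sigma_L$ (Equation (\ref{hkp})) lets one collapse any finite union $\bigcup_i v(x_i) \cap \Sigma_L$ to the subbasic $v(\bigwedge_i x_i) \cap \Sigma_L$. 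Applied to $S = \phi_*(\Sigma_{L'})$, density reduces to $\bigwedge \phi_*(\Sigma_{L'}) \leqslant \bigwedge_{x \in \Sigma_L} x$, and since $\phi\inv$ preserves meets this in turn becomes $\phi\inv(\bigwedge \Sigma_{L'}) \leqslant \bigwedge_{x \in \Sigma_L} x$. The kernel hypothesis immediately supplies $\phi\inv(0) \leqslant \bigwedge_{x \in \Sigma_L} x$, and the delicate final step is to promote the latter inequality to the former; this will rest on the multiplicative structure of $L$ together with the radical-like behaviour of $\bigwedge \Sigma_L$, as in the paradigmatic case $\Sigma_L = \mathrm{Spec}(L)$ where $\bigwedge \Sigma_L = \sqrt{0}$ absorbs $n$-th roots via $\phi(a)^n = \phi(a^n)$.
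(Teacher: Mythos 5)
Your part (1) is exactly the paper's computation, and your forward direction of (3) is correct. The genuine problem is the converse of (3), which you yourself leave open, and both pillars of your plan for it fail as stated. The closure formula $\cl(S)=v\left(\bigwedge S\right)\wedge\Sigma_L$ is not available for an arbitrary class $\Sigma_L$ satisfying the contraction property: it requires every element of $\Sigma_L$ to satisfy condition (\ref{hkp}) (strong irreducibility), which is not among the hypotheses of Proposition \ref{conmap} (classes such as $\mathrm{Prop}(L)$ or $\mathrm{Rad}(L)$ need not consist of strongly irreducible elements). For instance, in the Boolean algebra of subsets of $\{1,2,3\}$ with multiplication $=$ meet, let $\Sigma_L$ be the three atoms and $S$ two of them; then $S=v(s_1)\vee v(s_2)$ is already closed, while $v\left(\bigwedge S\right)=v(0)=\Sigma_L$. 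Worse, even where the formula does hold, your reduction asks for $\bigwedge\phi_*(\Sigma_{L'})\leqslant\bigwedge_{x\in\Sigma_L}x$, which is strictly stronger than the hypothesis that $k\leqslant\bigwedge_{x\in\Sigma_L}x$ for every $k\in\mathrm{Ker}\phi$; the ``promotion'' you hope to extract from the multiplicative structure and a radical-like behaviour of $\bigwedge\Sigma_L$ is not a finishing technicality but a new unproved claim, with no reason to hold for a general $\Sigma_L$. The paper's route avoids all of this: from the identity $\phi_*(v(y))=v(\phi\inv(y))$ obtained in the proof of (2), the case $y=0'$ shows that $\phi_*(\Sigma_{L'})$ is itself the closed set $\mathrm{Ker}\phi^{\uparrow}$, hence $\cl(\phi_*(\Sigma_{L'}))=\mathrm{Ker}\phi^{\uparrow}$, and density is then literally the statement $\mathrm{Ker}\phi^{\uparrow}=\Sigma_L$, i.e.\ the kernel condition; no closure formula for arbitrary subsets and no promotion step occur.

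On (2) your outline coincides with the paper's (image inside $\mathrm{Ker}\phi^{\uparrow}$, injectivity from $\phi\circ\phi\inv=\mathrm{id}_{L'}$, closedness via the image formula), but the step you defer --- surjectivity onto $\mathrm{Ker}\phi^{\uparrow}$ via $\phi\inv(\phi(x))=x$ whenever $x\geqslant\mathrm{Ker}\phi$ --- cannot simply be ``established'' for abstract multiplicative lattice homomorphisms: the surjection from the chain $0<a<1$ (multiplication $=$ meet) onto $0<1$ sending $a\mapsto 1$ has $\mathrm{Ker}\phi=\{0\}$, yet the contraction $\phi\inv(\phi(a))$ equals $1\neq a$, so the correspondence-theorem analogue fails without further assumptions. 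In fairness, the paper itself asserts $\mathrm{Im}\,\phi_*=\mathrm{Ker}\phi^{\uparrow}$ with no more justification, so on this point you have reproduced --- and at least made explicit --- a gap already present in the source; but your proposal neither proves the identity nor isolates the extra hypothesis under which it would hold.
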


\begin{proof}      
To show 1., let $x\in L$ and $v(x)$ be a   subbasic closed set of the lower space $\Sigma_L.$ Then  
\begin{align*}
\phi_*\inv(v(x)) &=\left\{ y\in  \Sigma_{L'}\mid \phi\inv(y)\in v(x)\right\}\\&=\left\{y\in \Sigma_{L'}\mid \phi(x)\leqslant  y\right\}=v(\phi(x)), 
\end{align*} 
and hence the map $\phi_*$  continuous.  
	
2. Observe that $\mathrm{Ker}\phi\subseteq  \phi\inv(y)$ follows from the fact that  $0\leqslant  y$ for all $y\in \Sigma_{L'}.$ It can thus been seen that $\phi_*(y)\in \mathrm{Ker}\phi^{\uparrow},$ and hence $\mathrm{Im}\phi_*=\mathrm{Ker}\phi^{\uparrow}.$  
If $y\in \Sigma_{L'},$ then
\[\phi\left(\phi_*\left(y\right)\right)=\phi\left(\phi\inv\left(y\right)\right)=y.\]
Thus $\phi_*$ is injective. To show that $\phi_*$ is closed, first we observe that for any   subbasic closed set  $v(x)$ of  $\Sigma_{L'}$, we have
\begin{align*}
\phi_*\left(v(x)\right)&=  \phi\inv\left(v(x)\right)\\&=\phi\inv\left\{ y\in \Sigma_{L'}\mid x\leqslant    y\right\}=v(\phi\inv(x)). 
\end{align*}
Now if $C$ is a closed subset of $\Sigma_{L'}$ and $C=\bigwedge_{ \omega \in \Omega} \left(\bigvee_{ i \,= 1}^{ n_{\omega}} v(x_{ i\omega})\right),$ then
\begin{align*}
\phi_*(C)&=\phi\inv \left(\bigwedge_{ \omega \in \Omega} \left(\bigvee_{ i = 1}^{ n_{\omega}} v(x_{ i\omega})\right)\right)\\&=\bigwedge_{ \omega \in \Omega}\, \bigvee_{ i = 1}^{n_{\omega}} \phi\inv\left(v(x_{ i\omega})\right)
\end{align*}
a closed subset of  $\Sigma_L.$ Since by (\ref{contxr}), $\phi_*$ is continuous, we have the desired claim.
	
3. First we wish to show: \[\cl\left(\phi_*\left(v(y)\right)\right)=\phi\inv\left(v(y)\right),\] for all elements $y$ of $L'$. For that, let $z\in \phi_*\left(v(y)\right).$ This implies $\phi\left(z\right)\in v(y),$ and that means $y\leqslant  \phi(z).$ Therefore, $z\in v(\phi\inv\left(y\right)).$ Since $\phi_*\left(v(y)\right)=v(\phi\inv\left(y\right))$, the other inclusion follows. If we take $y=0'\in L'$, the above identity reduces to 
\[\cl\left(\phi_*\left(\Sigma_{L'}\right)\right)=\mathrm{Ker}\phi^{\uparrow},\] and hence  $\mathrm{Ker}\phi^{\uparrow}$ to be equal to $\Sigma_L$ if and only if the desired condition holds.
\end{proof}  

We finally wish to see relations between  the lower space $\mathrm{Irr}^+(L)$ and its subspaces $\mathrm{Max}(L)$ and   $\mathrm{Spec}(L)$. For that, we first introduce a couple of terminology. The \emph{$p$-radical} $\sqrt[p]{L}$ (respectively \emph{$s$-radical} $\sqrt[s]{L}$) of a multiplicative lattice $L$ is the intersection of all prime (respectively strongly irreducible) elements of $L$.

\begin{proposition}
Let $L$ be a compactly generated multiplicative lattice with $1$ compact.
\begin{enumerate}
		
\item The subspace $\mathrm{Max}(L)$ is dense in the lower space $\mathrm{Irr}^+(L)$ if and only if $\sqrt[p]{L}=\sqrt[s]{L}.$
		
\item The subspace $\mathrm{Spec}(L)$  is dense in the lower space $\mathrm{Irr}^+(L)$ if and only if $\mathrm{Jac}(L)=\sqrt[s]{L}.$
\end{enumerate}
\end{proposition}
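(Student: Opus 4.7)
The plan is to compute the closures $\overline{\mathrm{Max}(L)}$ and $\overline{\mathrm{Spec}(L)}$ inside $\mathrm{Irr}^+(L)$ by a hull-kernel argument, and then read each density off via Proposition \ref{hrx}(\ref{xbxa}).

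First I would record that on $\mathrm{Irr}^+(L)$ the closed subbasis is already stable under finite union: strong irreducibility of every point gives $v(x)\cup v(y)=v(x\wedge y)$, and $\bigcap_{\alpha}v(x_{\alpha})=v(\bigvee_{\alpha}x_{\alpha})$ holds in general, so every closed subset of $\mathrm{Irr}^+(L)$ has the form $v(z)$ for some $z\in L$. In particular $\mathrm{Irr}^+(L)=v(0)$, and since $\bigwedge v(0)=\bigwedge \mathrm{Irr}^+(L)=\sqrt[s]{L}$, Proposition \ref{hrx}(\ref{xbxa}) yields $\mathrm{Irr}^+(L)=v(\sqrt[s]{L})$.

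Next, for any $A\subseteq \mathrm{Irr}^+(L)$, the equivalence $A\subseteq v(x)\Leftrightarrow x\leqslant \bigwedge A$ shows that the smallest closed set containing $A$ is $v(\bigwedge A)$; that is, $\overline{A}=v(\bigwedge A)$. Specialising to $A=\mathrm{Max}(L)$ and $A=\mathrm{Spec}(L)$ gives $\overline{\mathrm{Max}(L)}=v(\mathrm{Jac}(L))$ and $\overline{\mathrm{Spec}(L)}=v(\sqrt[p]{L})$. Each density therefore becomes an equality of the form $v(\cdot)=v(\sqrt[s]{L})$, which by Proposition \ref{hrx}(\ref{xbxa}) is equivalent to the corresponding equality of infima. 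Combined with the chain $\mathrm{Jac}(L)\geqslant \sqrt[p]{L}\geqslant \sqrt[s]{L}$ coming from $\mathrm{Max}(L)\subseteq \mathrm{Spec}(L)\subseteq \mathrm{Irr}^+(L)$, each density assertion forces the collapse of a segment of this chain.

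The hard part will be the final matching step. The hull-kernel calculation above most directly pairs $\mathrm{Max}(L)$-density with the collapse $\mathrm{Jac}(L)=\sqrt[s]{L}$ and $\mathrm{Spec}(L)$-density with $\sqrt[p]{L}=\sqrt[s]{L}$, which is the transpose of the pairing written in the proposition. To recover the stated equivalences I would need to thread the argument through the intermediate inequality $\sqrt[p]{L}\leqslant \mathrm{Jac}(L)$, exploiting the fact that under the hypotheses on $L$ (compactly generated with $1$ compact, so maximal elements exist and are prime) a collapse at one end of the chain propagates through the rest; the delicate point is to exhibit the implicit identification that turns $\mathrm{Jac}(L)=\sqrt[s]{L}$ into $\sqrt[p]{L}=\sqrt[s]{L}$ and vice versa, and this is the step where I expect all the care to be concentrated.
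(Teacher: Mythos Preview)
Your hull--kernel computation is exactly the paper's approach: the paper records that for $X\subseteq \mathrm{Irr}^+(L)$ one has
\[
\cl(X)=\left\{y\in \mathrm{Irr}^+(L)\ \middle|\ \bigwedge_{x\in X}x\leqslant y\right\}=v\!\left(\bigwedge X\right),
\]
and then specialises. So the method is right, and the pairings it produces are
\[
\cl(\mathrm{Max}(L))=v(\mathrm{Jac}(L)),\qquad \cl(\mathrm{Spec}(L))=v(\sqrt[p]{L}),
\]
whence $\mathrm{Max}(L)$ is dense iff $\mathrm{Jac}(L)=\sqrt[s]{L}$, and $\mathrm{Spec}(L)$ is dense iff $\sqrt[p]{L}=\sqrt[s]{L}$.

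The ``hard part'' you describe does not exist, and you should not attempt it. The transposition you detected is a labeling slip in the \emph{statement}, not a mathematical obstacle to be overcome. The paper's own proof of item~1 opens with ``Let $\cl(\mathrm{Spec}(L))=\mathrm{Irr}^+(L)$'' and derives $\sqrt[p]{L}=\sqrt[s]{L}$, i.e.\ it actually argues the $\mathrm{Spec}$--$\sqrt[p]{}$ pairing under the heading~1, and then says item~2 follows analogously. Your plan to ``propagate'' a collapse along the chain $\mathrm{Jac}(L)\geqslant \sqrt[p]{L}\geqslant \sqrt[s]{L}$ cannot work in both directions: from $\mathrm{Jac}(L)=\sqrt[s]{L}$ one does get $\sqrt[p]{L}=\sqrt[s]{L}$ by squeezing, but the converse fails in general (take any $L$ with $\sqrt[p]{L}=\sqrt[s]{L}$ strictly below $\mathrm{Jac}(L)$; then $\mathrm{Spec}(L)$ is dense while $\mathrm{Max}(L)$ is not). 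So the equivalences \emph{as labeled} are not both true, and no amount of care will repair that step. Present your argument with the natural pairing and flag the swap.
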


\begin{proof}
1. Although the claim essentially follows from the fact that if $X\subseteq \mathrm{Irr}^+(L)$, then 
\[\cl(X)=\left\{y\in \mathrm{Irr}^+(L)\mid  \bigwedge_{x\in X}x\leqslant y\right\},\] however, we provide some details.  Let $\cl(\mathrm{Spec}(L))=\mathrm{Irr}^+(L).$ Then \[\left\{y\in \mathrm{Irr}^+(L)\mid \bigwedge_{p\in \mathrm{Spec}(L)}p\leqslant y\right\}=\mathrm{Irr}^+(L).\] This implies that
$\sqrt[p]{L}\subseteq \sqrt[s]{L}.$
Furthermore, $\mathrm{Max}(L)\subseteq \mathrm{Irr}^+(L)$ implies $\sqrt[s]{L}\subseteq \sqrt[p]{L}$. Hence, we have the desired equality. To obtain the converse, let $\mathrm{Irr}^+(L)\setminus \cl(\mathrm{Spec}(L))\neq \emptyset.$ This implies $y\notin \cl(\mathrm{Spec}(L))$, but $y\in \mathrm{Irr}^+(L).$ Therefore, there exists a neighbourhood $N_y$ of $y$ such that $N_y\cap \mathrm{Spec}(L)=\emptyset,$ and $\sqrt[s]{L}\subsetneq \sqrt[p]{L}.$ In other words, we have  $\sqrt[s]{L}\neq \sqrt[p]{L}.$
	
2. Follows from 1.
\end{proof}

\smallskip

\end{document}